\documentclass{article}
\usepackage{fullpage}
\usepackage{cite}
\usepackage{amsmath,amssymb,amsfonts}
\usepackage{algorithmic}
\usepackage{amsthm}
\usepackage{graphicx}
\usepackage{textcomp}
\usepackage{lipsum}
    \usepackage{subfigure}
    \usepackage{float}
\newtheorem{theorem}{Theorem}
\newtheorem{lemma}{Lemma}
\newtheorem{remark}{Remark}
\newtheorem{example}{Example}
\DeclareMathOperator{\diag}{diag}

\DeclareMathOperator{\Diag}{Diag}

\newtheorem{myassump}{Assumption}

\def\BibTeX{{\rm B\kern-.05em{\sc i\kern-.025em b}\kern-.08em
    T\kern-.1667em\lower.7ex\hbox{E}\kern-.125emX}}

\newcommand\blfootnote[1]{%
  \begingroup
  \renewcommand\thefootnote{}\footnote{#1}%
  \addtocounter{footnote}{-1}%
  \endgroup
}

\begin{document}
	
	\begin{center}
		{\bf {\LARGE{Distributed gradient methods under heavy-tailed communication noise}}}
		
		\vspace*{.2in}
		
				\large{
					\begin{tabular}{cc}
						 Manojlo Vukovic$^\circ$, & Dusan Jakovetic$^\ast$,
					\end{tabular}
					\begin{tabular}{ccc}
						Dragana Bajovic$^\dagger$, & Soummya Kar$^\rhd$
					\end{tabular}
				}
			
		\vspace*{.2in}
			
		\begin{tabular}{c}
        $^\circ$University of Novi Sad, Faculty of Technical Sciences, Department of\\ Fundamental Sciences (manojlo.vukovic@uns.ac.rs)\\
		$^\ast$ University of Novi Sad, Faculty of Sciences, Department of\\ Mathematics and Informatics (dusan.jakovetic@dmi.uns.ac.rs)
		\\ $^\dagger$ University of Novi Sad, Faculty of Technical Sciences, Department of \\Power, Electronic and
		Communication Engineering(dbajovic@uns.ac.rs) \\ 
		$^\rhd$ Department of Electrical and Computer Engineering, \\Carnegie Mellon University (soummyak@andrew.cmu.edu)
		\\
		\end{tabular}
				
				\vspace*{.2in}
				
				\today
				
				\vspace*{.2in}
		
	\end{center}
	
	\begin{abstract}
We consider a standard distributed optimization problem in which networked nodes collaboratively minimize the sum of their locally known convex costs. 
     For this setting, we address for the first time the fundamental problem of design and analysis of distributed methods to solve the above problem when inter-node communication is subject to \emph{heavy-tailed} noise. 
     Heavy-tailed noise is highly relevant and frequently arises in densely deployed wireless sensor and Internet of Things (IoT) networks. 
     Specifically, we design a distributed gradient-type method that features a carefully balanced mixed time-scale time-varying consensus and gradient contribution step sizes and a bounded nonlinear operator on the consensus update to limit the effect of heavy-tailed noise. Assuming heterogeneous strongly convex local costs with mutually different minimizers that are arbitrarily far apart, we show that the proposed method converges to a neighborhood of the network-wide problem solution in the mean squared error (MSE) sense, and we also characterize the corresponding convergence rate. We further show that the asymptotic MSE can be made arbitrarily small through consensus step-size tuning, possibly at the cost of slowing down the transient error decay. Numerical experiments corroborate our findings and demonstrate the resilience of the proposed method to heavy-tailed (and infinite variance) communication noise. They also show that existing distributed methods, designed for finite-communication-noise-variance settings, fail in the presence of infinite variance noise.\blfootnote{\textbf{Acknowledgement.} The work of D. Jakovetic and M. Vukovic is supported by the Science Fund of the Republic of Serbia, GRANT No 7359, Project title- LASCADO. 
     The work of D. Bajovic and M. Vukovic is supported by  the Ministry of Science, Technological Development and Innovation (Contract No. 451-03-137/2025-03/200156) and the Faculty of Technical Sciences, University of Novi Sad through project “Scientific and Artistic Research Work of Researchers in Teaching and Associate Positions at the Faculty of Technical Sciences, University of Novi Sad 2025” (No. 01-50/295). The work of D. Jakovetic is  supported by the Ministry of Science, Technological Development and Innovation of the Republic of Serbia (Grants No. 451-03-137/2025-03/ 200125 \& 451-03-136/2025-03/ 200125)}
      
\end{abstract}

\section{Introduction}
    We consider a standard distributed optimization problem where  networked nodes (agents) collaborate among each other in order to minimize a global objective function, while each agent has information only on its own local objective function and can cooperate only with its neighboring agents. To be more precise, we consider standard distributed gradient strategies where each agent at each time step updates its current solution estimate by 1)  weight averaging its own and its neighbors’ solution estimates, and 2) moving it in the negative direction of its local objective gradient~\cite{NedicOzdaglar,dusanrate}.

    Distributed optimization has been extensively studied, e.g., ~\cite{dusanrate,nediccomnoise,shuhua,PuNedic,ShorinwaQuasi,DualAveraging,DualAveraging2,chatzipanagiotis2015augmented,augmented2,boyd2011distributed,adam2,convexmachine,ozdaglar}, and different types of methods have been proposed, including distributed gradient-like~\cite{nediccomnoise}, gradient-tracking~\cite{PuNedic}, quasi-Newton~\cite{ShorinwaQuasi} and Newton-like~\cite{liu2023communication}, dual averaging~\cite{DualAveraging,DualAveraging2}, and augmented Lagrangian~\cite{chatzipanagiotis2015augmented,augmented2} and alternating multiplier direction methods~\cite{boyd2011distributed,adam2}. The methods have been analyzed for different function classes and under various imperfections in computations (e.g., stochastic or deterministic gradient errors)~\cite{dusanrate,nediccomnoise} and communications (e.g., network failing links or noisy communication links)\cite{nediccomnoise}. 

    In this paper, we focus on the fundamental problem of distributed optimization in the presence of \emph{heavy-tailed communication noise}. In particular, unlike existing work on distributed optimization under communication noise~\cite{nediccomnoise,wangnoisysharing}, we allow the noise to have infinite variance. Heavy-tailed communication noise frequently arises in modern wireless communication systems such as densely deployed wireless sensor networks and Internet of Things (IoT) networks~\cite{IoTHeavyTail,IoTHeavyTail2,18,19,20}. For example, reference~\cite{win,Haenggi} theoretically shows that interference noise in a densely deployed wireless sensor network follows a stable distribution with heavy tails $\alpha$. On the other hand, reference~\cite{IoTHeavyTail} provides empirical evidence of heavy-tailed noise in practical IoT deployments.
    
 In more detail, we design a novel distributed gradient-type method that is provably resilient to additive heavy-tailed communication noise. The proposed method  incorporates a bounded nonlinearity in the consensus update part and utilizes carefully designed different time-scale weightings for the consensus and gradient update parts. The nonlinearity essentially  
    annihilates the effect of heavy tails by ``clipping'' large noise values and, as we show ahead, is necessary for handling the heavy-tailed communication noise. On the other hand, ``clipping'' in the consensus update also limits the information flow among the agents. We successfully address the noise suppression-information flow tradeoff via the careful two time-scale weighting of the consensus and gradient updates. 
    
    Assuming strongly convex local costs, We prove that the proposed method converges to a neighborhood of the global optimizer in the mean squared error (MSE)  sense, and we also characterize the rate of achieving the neighborhood. More precisely, we show that the MSE 
    is of the order $O(a^2+1/t^\gamma)$, 
    where $t$ is the iteration counter, $a/(t+1)$ is the gradient 
    weighting step-size and $\gamma>0.$ For the sufficiently small $a,$ it can been shown that $\gamma$ is linear function of $a,$ and therefore, there is a tradeoff between the limiting bound and the convergence rate with respect to the choice of $a.$
    Hence, the asymptotic error can be controlled by the gradient part step-size design. Numerical experiments on synthetic and real data confirm the analytical findings. In addition, we show by simulation that existing distributed optimization methods tailored for communication noise settings~\cite{nediccomnoise} break down in the presence of heavy-tailed noise.  

    We next briefly review the literature to help us contrast our paper with existing work. The methods in~\cite{dusanrate,nediccomnoise,shuhua} provide various convergence results for the methods proposed therein under different assumptions on model imperfections such as gradient noise and communication noise. For example, the authors in~\cite{dusanrate} establish a MSE convergence rate in the presence of (finite-variance) gradient noise. On the other hand, reference~\cite{nediccomnoise} establishes almost sure (a.s.) convergence under noisy communication links. To be more precise, the authors in~\cite{nediccomnoise} assume that the  variance of the communication noise is bounded by a certain function of time. 
    Moreover, noisy communications in distributed optimization have been analyzed in~\cite{Wang,wangnoisysharing}. However, these papers do not cover the case when communication between agents follows heavy- tailed distributions with infinite  variance. To the best of our knowledge, prior to this work, no convergence results in the presence of heavy tailed communication noise and infinite variance has been established in the distributed optimization context. 

    Heavy tails have been considered earlier in  centralized optimization~\cite{gardientHTnoise}. In distributed optimization, heavy tails have also been considered, but only in terms of gradient noise~\cite{shuhua}, not communication noise. Technically, introducing heavy-tailed communication noise leads to very different stochastic dynamics and requires different algorithmic design and analysis tools when compared with~\cite{shuhua}. Furthermore, heavy tailed noises have been studied in the context of distributed estimation~\cite{Ourwork,Ourwork1}. Reference~\cite{Ourwork,Ourwork1} establishes strong convergence results in the presence of heavy-tailed communication noise~\cite{Ourwork}, while reference~\cite{Ourwork1} additionally handles gradient (observation) noise. The distributed estimation problem studied in \cite{Ourwork,Ourwork1} may be viewed as a distributed optimization problem when local objective functions are quadratic and \emph{have a common optimizer}.  However. the general optimization problem considered here fundamentally lacks this property, i.e., the local optimizers are mutually different. In fact, this local solutions heterogeneity brings a major technical challenge here; thus a different algorithmic design and analysis tools are required when compared with \cite{Ourwork,Ourwork1}. Notably, as we show by simulation, a direct adaptation of the method in \cite{Ourwork} (with equal time-weighting of the consensus and gradient update parts) to the distributed optimization problem considered here leads to poor performance in general.

	\textbf{Paper organization}. In Section~\ref{section-model-algorithm} we describe the distributed optimization model that is considered and give all basic assumptions. Section~\ref{section-proposed-algorithm} presents the proposed nonlinear method. Section~\ref{section-convergenceanalysis} establishes convergence results. In Section~\ref{section:examples} we present numerical examples. The conclusion is given in Section~\ref{section-conlusion}.

	\textbf{Notation}. We denote by $\mathbb R$ the set of real numbers and by ${\mathbb R}^m$ the $m$-dimensional
	Euclidean real coordinate space. We use normal lower-case letters for scalars,
	lower case boldface letters for vectors, and upper case boldface letters for
	matrices. Further,  to represent a vector $\mathbf{a}\in\mathbb{R}^m$ through its component, we write $\mathbf{a}=[\mathbf{a}_1, \mathbf{a}_2, ...,\mathbf{a}_m]^\top$ and we denote by: $\mathbf{a}_i$ or $[\mathbf{a}_i]$, as appropriate, the $i$-th element of vector $\mathbf{a}$; $\mathbf{A}_{ij}$ or $[\mathbf{A}_{ij}]$, as appropriate, the entry in the $i$-th row and $j$-th column of
	a matrix $\mathbf{A}$;
	$\mathbf{A}^\top$ the transpose of a matrix $\mathbf{A}$; $\otimes$ the Kronecker product of matrices. Further, we use either  
	$\mathbf{a}^\top \mathbf{b}$ or 
	$\langle \mathbf{a},\,\mathbf{b}\rangle$ 
	for the inner products of vectors 
	$\mathbf{a}$ and $\mathbf{b}$. Next, we let  
	$\mathbf{I}$, $\mathbf{0}$, and $\mathbf{1}$ be, respectively, the identity matrix, the zero vector, and the column vector with unit entries; $\Diag(\mathbf{a})$ the diagonal matrix 
	whose diagonal entries are the elements of vector~$\mathbf{a}$; 
	$\mathbf{J}$ the $N \times N$ matrix $\mathbf{J}:=(1/N)\mathbf{1}\mathbf{1}^\top$.
	When appropriate, we indicate the matrix or vector dimension through a subscript.
	Next, $\mathbf{A}\succ  0 \,(\mathbf{A} \succeq  0 )$ means that
	the symmetric matrix $A$ is positive definite (respectively, positive semi-definite).
	We further denote by:
	$\|\cdot\|=\|\cdot\|_2$ the Euclidean (respectively, spectral) norm of its vector (respectively, matrix) argument; $\lambda_i(\cdot)$ the $i$-th smallest eigenvalue; $g^\prime(v)$ the derivative evaluated at $v$ of a function $g:\mathbb{R}\to\mathbb{R}$; $\nabla h(\mathbf{w})$ and $\nabla^2 h(\mathbf{w})$ the gradient and Hessian, respectively, evaluated at $w$ of a function $h: {\mathbb R}^m \rightarrow {\mathbb R}$, $m > 1$; $\mathbb P(\mathcal A)$ and $\mathbb E[u]$ the probability of
	an event $\mathcal A$ and expectation of a random variable $u$, respectively.
	Finally, for two positive sequences $\eta_n$ and $\chi_n$, we have: $\eta_n = O(\chi_n)$ if
	$\limsup_{n \rightarrow \infty}\frac{\eta_n}{\chi_n}<\infty$.

	\section{Problem model and assumptions}
	\label{section-model-algorithm}

    We consider a network of $N$ nodes where nodes mutually collaborate in aim to solve the following unconstrained problem
    \begin{align}\label{eq:problemmodel}
        \min f(x)= \sum\limits_{i=1}^N f_i(\mathbf{x}),
    \end{align}
    where function $f_i:\mathbb{R}^M\to \mathbb{R}$ is only available to node $i,$ $i=1,2,...,N.$
    The underlying probability space is denoted by $(\Omega,\mathcal{F},\mathbb{P})$.
    The underlying network topology is modeled via a graph $G=(V,E),$ where $V=\{1,...,N\}$ is the set of agents and $E$ is the set of links, i.e., $\{i,j\}\in E$ if there exists a link between agents $i$ and $j$. Moreover, we also define the set of all arcs $E_d$ in the following way: if $\{i,j\} \in E$ then $(i,j)\in E_d$ and $(j,i) \in E_d,$ where $(i,j)$ corresponds to communication froj $j$ to $i.$ The set of neighbors of node $i$ is denoted by  $\Omega_i=\{j\in V: \{i,j\}\in E\}$, the degree matrix is given by $\mathbf{D}=\Diag(\{d_i\})$, where $d_i=|\Omega_i|$ is the number of neighbors of agent~$i$, the adjacency matrix $\mathbf{A}$ is a zero-one symmetric matrix with zero diagonal, such that, for $i\neq j$, $\mathbf{A}_{ij}=1$ if and only if $\{i,j\}\in E$ and the graph Laplacian matrix $\mathbf{L}$ is defined by $\mathbf{L}=\mathbf{D}-\mathbf{A}.$ 
	\noindent We make the following assumptions.
    \begin{myassump}\label{as:newtworkmodel}\textbf{Network model:} \\
         \noindent Graph $G=(V,E)$ is connected, simple (no self or multiple links) and static.
	\end{myassump}
\noindent Notice that this assumption is equivalent to $\lambda_2(\mathbf{L})>0.$
    \begin{myassump}\label{as:solvability}\textbf{Solvability:} \\
        \noindent For all $i=1,2,...,N$ function $f_i:\mathbb{R}^M\to \mathbb{R}$ is twice continuously differentiable and 
            \begin{align*}
                \mu \mathbf{I} \preceq \nabla^2f_i(\mathbf{x} )\preceq L \mathbf{I}
            \end{align*}
            for all $\mathbf{x}\in\mathbb{R}^M,$ where $0<\mu\leq L<\infty.$ 
	\end{myassump}
\noindent This assumption implies that for each $i$ function $f_i$ is strongly convex and has Lipschitz continuous gradient, which ensures that problem~\eqref{eq:problemmodel} has the unique solution $\mathbf{x}^\star\in\mathbb{R}^M.$
	
\noindent For those nodes $i$ and $j$ for which there is an arc, we 
denote by $\boldsymbol{\xi}_{ij}^t$ communication noise that corrupts the signal when node $i$ is receiving it from node $j$ at time instant $t$ (see algorithm~\eqref{eq:alg1}).
	
	\begin{myassump}\label{as:communicationnoise}\textbf{Communication noise:}
		\begin{enumerate}
			\item Additive communication noise $\{\boldsymbol{\xi}^t_{ij}\}$, $\boldsymbol{\xi}^t_{ij} \in \mathbb{R}^M$ is i.i.d. in time $t$, and independent across different arcs~$(i,j)\in E_d.$
            \item Each random variable $[\boldsymbol{\xi}^t_{ij}]_{\ell}$, for 
			each $t=0,1...$, for each arc $(i,j)$, 
			for each entry $\ell=1,...,M$, 
			has the same probability density function~$p$.
            \item The pdf $p$ is symmetric, i.e. $p(u)=p(-u),$ for every $u\in\mathbb{R}$ and $p(u)>0$ for $|u|\leq c$, for some constant $c>0$;
            \item There holds that $\int |u|p(u)du <\infty$.
		\end{enumerate}
	\end{myassump}
    Notice here that the condition 3. in Assumption~\ref{as:communicationnoise} ensures that communication noise is zero mean. Also, notice that there is no assumption on the tails distribution nor on the variance of the communication noise. In particular, the variance can be infinite. Moreover, condition 2. in Assumption~\ref{as:communicationnoise} can be relaxed in the sense that entries $[\boldsymbol{\xi}^t_{ij}]_\ell$ and $[\boldsymbol{\xi}^t_{ij}]_s$ for $\ell\neq s$ can be mutually dependent.
	
	\section{Proposed algorithm}\label{section-proposed-algorithm}

    In order to reach a neighborhood of the solution $\mathbf{x}^\star$ of  problem~\eqref{eq:problemmodel} in the presence of heavy-tailed communication noise, we propose a distributed stochastic gradient strategy, where a nonlinearity in the communication update part is added in order to  annihilate the effect of the heavy-tailed noise. To be more precise, each node $i,$ $i=1,2,..,N$, at each time $t$, updates its solution estimate $\mathbf{x}_{i}^{t}$ by the following algorithm:
	
	\begin{align}\label{eq:alg1}
		\mathbf{x}_{i}^{t+1}=\mathbf{x}_{i}^{t}-\beta_{t}\sum_{j\in\Omega_{i}}
		\boldsymbol{\Psi}\left( \mathbf{x}_{i}^{t}-\mathbf{x}_{j}^{t} 
		+\boldsymbol{\xi}_{ij}^t\right)-\alpha_t\nabla f_i(\mathbf{x}_i).
	\end{align} %
    Here, $\alpha_t=\frac{a}{t+1}$ and $\beta_t=\frac{b}{(t+1)^\delta}$ are step sizes; and $a,b>0$ and $\frac{1}{2}<\delta<1$ are constants.
    As it will be clear later, the choice of constants $a, \delta$ is crucial in order for the algorithm to achieve a sufficiently small neighborhood of the solution $\mathbf{x}^\star$ (in the sense of MSE).  
    Moreover, the step size decay rate $\delta$ of $\beta_t,$ must be in $(1/2,1),$ since for other choices, algorithm~\eqref{eq:alg1} may fail to converge.   
    The map $\boldsymbol{\Psi}:  \mathbb{R}^M\to\mathbb{R}^M$ is a non-linear map;  $\boldsymbol{\Psi}$ operates component-wise, i.e., for
	 $\mathbf{x}\in\mathbb{R}^M$, we set that (slightly abusing notation) $\boldsymbol{\Psi}(\mathbf{x})=[{\Psi}(\mathbf{x}_1),{\Psi}(\mathbf{x}_2),...,{\Psi}(\mathbf{x}_M)].$
     We make the following assumption.
	 	
	\begin{myassump}\label{as:nonlinearity}\textbf{Nonlinearity $\Psi$:}\\
		The non-linear function $\Psi:\mathbb{R}\to\mathbb{R}$ satisfies the following properties:
		\begin{enumerate}
			\item Function~$\Psi$ is odd, i.e., $\Psi(a)=-\Psi(-a),$ for any $a\in\mathbb{R}$;
			\item $\Psi(a) > 0,$ for any $a > 0$.
			\item Function $\Psi$ is monotonically nondecreasing;
            \item $|\Psi(a)|\leq c_1$, for some constant $c_1>0.$
			\item $\Psi$ is continuous and differentiable for all $a\in\mathbb{R}$ and $|\Psi^\prime(a)|\leq c_2$ for some constant $c_2>0.$
		\end{enumerate}
	\end{myassump}
    There are many functions that satisfy Assumption~\ref{as:nonlinearity}, such as $\Psi_1(a)=\arctan(a), \Psi_2(a)=\frac{x}{\sqrt{1+x^2}}.$ 
	\noindent The purpose of nonlinearity~$\Psi$ is to mitigate the impact of the heavy tailed communication noise. 
    Assumption~\ref{as:nonlinearity} differs from the analogous assumption on the nonlinearity in~\cite{Ourwork,Ourwork1}, i.e., here we additionally assume that the nonlinearity is differentiable for all $a\in\mathbb{R}.$ The reasoning behind this is the necessity of linearization of the consensus part. (See ahead the proofs of Lemmas~\ref{lemma-MSE} and~\ref{lemma:disagreementbounds} and Theorem~\ref{theorem-MSE})

	\section{Convergence analysis}\label{section-convergenceanalysis}
	The analysis outline and strategy is as follows. 
	In subsection~\ref{subsection:PreliminariesandAuxiliaryResults}, we analyze algorithm~\eqref{eq:alg1} and write it in a more suitable form that will be used in the following proofs in subsection~\ref{subsection:MSE}. In  Subection~\ref{subsection:Boundedness}, we show that the quantity $\mathbf{x}^t$ generated by~\eqref{eq:alg1} is a.s. bounded by a linear function of $t^{1-\delta}.$ We also show that the MSE of the proposed algorithm~\eqref{eq:alg1} is uniformly bounded.
    Finally, in subsection~\ref{subsection:MSE}, we show that the proposed algorithm~\eqref{eq:alg1} achieves a solution neighborhood that depends on parameters $a$ and $\delta$ of the algorithm in the MSE sense. We also establish a MSE rate of convergence to the neighborhood.
	
	\subsection{Preliminaries and Auxiliary Results}
	\label{subsection:PreliminariesandAuxiliaryResults}

    First, it can be seen that algorithm~\eqref{eq:alg1} can be written in compact form, i.e., at each time $t=0,1,...$, algorithm~\eqref{eq:alg1} can be written as
	\begin{align}\label{eq:algcomp}
		\mathbf{x}^{t+1}=\mathbf{x}^{t}-\beta_t \mathbf{L}_{\boldsymbol{\Psi}}(\mathbf{x})-\alpha_t \nabla F(\mathbf{x^t}).
	\end{align}
	Here, $\mathbf{x}^t=[\mathbf{x}_1^t,\mathbf{x}_2^t,...,\mathbf{x}_N^t]^\top\in\mathbb{R}^{MN},$ map $\mathbf{L}_{\boldsymbol{\Psi}}(\mathbf{x}):\mathbb{R}^{MN}\to\mathbb{R}^{MN}$ is defined by
	\begin{align*}
		\mathbf{L}_{\boldsymbol{\Psi}}(\mathbf{x})=\begin{bmatrix}
			\vdots\\
			\sum\limits_{j\in\Omega_{i}}\boldsymbol{\Psi}(\mathbf{x}_i-\mathbf{x}_j+\boldsymbol{\xi}_{ij})\\
			\vdots
		\end{bmatrix},
	\end{align*}
	where the blocks $\sum\limits_{j\in\Omega_{i}}\boldsymbol{\Psi}(\mathbf{x}_i-\mathbf{x}_j+\boldsymbol{\xi}_{ij})\in\mathbb{R}^M$ are stacked one on top of another for $i=1,...,N,$ and function $F(\mathbf{x})=\sum\limits_{i=1}^N f_i(\mathbf{x}_i).$
    
	\noindent Next, we consider function $\varphi:\mathbb{R}\to\mathbb{R}$ that is given by
	\begin{align}
		\label{eq:phi-def}
		\varphi(a)=\int \Psi(a+w)p(w) dw,
	\end{align}
    where $\Psi:\mathbb{R}\to\mathbb{R}$ is a nonlinear function that satisfies Assumption~\ref{as:nonlinearity}, and $p$ is a probability density function of communication noise that satisfies Assumption~\ref{as:communicationnoise}. 
    We make use of the following Lemma from~\cite{PolyakNL}, see also~\cite{Ourwork,Ourwork1}.
	\begin{lemma} [Polyak Lemma~\cite{PolyakNL}]
		\label{Lemma-Polyak}
		Consider function $\varphi$ in \eqref{eq:phi-def}, where function $\Psi:\mathbb{R}\to\mathbb{R}$, satisfies Assumption \ref{as:nonlinearity}. Then, the following holds:
		\begin{enumerate}
			\item ${\varphi}$ is odd;
			\item If $|\Psi(\nu)| \leq c_1,$ for any $\nu \in \mathbb R$, then $|{\varphi}(a)| \leq c_1^{\prime},$ for any $a \in \mathbb R$, for some
			$c_1^\prime>0$;
			\item ${\varphi}(a)$ is monotonically nondecreasing;
			\item ${\varphi}(a) > 0,$ for any $ a>0$.
			\item ${\varphi}$ is continuous and differentiable for all $a\in\mathbb{R}$ with a strictly positive derivative at zero.
		\end{enumerate}
	\end{lemma}
    \begin{proof}
        For the proof of parts 1-4 see~\cite{PolyakNL}. Note that by boundedness of $\Psi'$ (see Assumption~\ref{as:nonlinearity}), and by using Lebesgue dominant theorem, differentiability of function $\varphi$ follows directly. 
    \end{proof} 
    Results from Lemma~\ref{Lemma-Polyak} ensure that function $\varphi$ has all key properties of function $\Psi$. Moreover, next lemma shows that $\varphi(a)$ can be bounded away from zero, since we have that $\varphi^\prime(0)>0.$
    \begin{lemma}\label{lemma:bound}
        Consider the function $\varphi$ in \eqref{eq:phi-def}. There exists a positive constant $G$ such that $|\varphi(a)|\geq \frac{\varphi'(0)\, G |a|}{2\,g},$ for all $0<|a|<g.$ Moreover, $\varphi'(c)\geq \frac{\varphi'(0)\, G}{2\,g},$ for $c\in (\min\{0,a\},\max\{0,a\})$ such that $\varphi(a)=\varphi'(c)a$ provided that $0<|a|<g.$
    \end{lemma}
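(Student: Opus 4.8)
The plan is to derive both statements from the qualitative properties of $\varphi$ collected in Lemma~\ref{Lemma-Polyak}, combined with a compactness argument on the interval $[-g,g]$. Recall from Lemma~\ref{Lemma-Polyak} that $\varphi$ is odd (so $\varphi(0)=0$ and $\varphi(a)$ has the same sign as $a$), monotonically nondecreasing (so $\varphi'(a)\geq 0$), continuous and differentiable, and satisfies $\varphi'(0)>0$.

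For the first inequality, I would introduce the normalized ratio $h(a):=\varphi(a)/a$ for $a\neq 0$, extended by $h(0):=\varphi'(0)$. First I would check that $h$ is continuous on all of $[-g,g]$: away from the origin this is immediate from continuity of $\varphi$, while at the origin it follows from differentiability of $\varphi$ together with $\varphi(0)=0$, which gives $\lim_{a\to 0}\varphi(a)/a=\varphi'(0)=h(0)$. Next I would argue that $h$ is strictly positive on $[-g,g]$: for $a>0$ we have $\varphi(a)>0$, for $a<0$ we have $\varphi(a)<0$ by oddness, so in both cases $h(a)=\varphi(a)/a>0$, and $h(0)=\varphi'(0)>0$. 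Since $[-g,g]$ is compact, $h$ attains a minimum $m:=\min_{|a|\leq g}h(a)$, which is therefore strictly positive. Consequently $|\varphi(a)|=h(a)\,|a|\geq m\,|a|$ for all $0<|a|<g$, and setting $G:=2gm/\varphi'(0)>0$ rewrites this bound exactly as the claimed $|\varphi(a)|\geq \frac{\varphi'(0)\,G\,|a|}{2\,g}$.

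For the second inequality I would invoke the mean value theorem. Since $\varphi$ is differentiable with $\varphi(0)=0$, for every $a$ with $0<|a|<g$ there exists $c$ strictly between $0$ and $a$, i.e. $c\in(\min\{0,a\},\max\{0,a\})$, such that $\varphi(a)=\varphi'(c)\,a$. Taking absolute values and using $\varphi'(c)\geq 0$ gives $\varphi'(c)\,|a|=|\varphi(a)|$; combining this with the first inequality yields $\varphi'(c)\,|a|\geq \frac{\varphi'(0)\,G}{2\,g}\,|a|$, and dividing by $|a|>0$ gives $\varphi'(c)\geq \frac{\varphi'(0)\,G}{2\,g}$, as required. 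Note this route uses only monotonicity of $\varphi$ (through $\varphi'\geq 0$) and not continuity of $\varphi'$, which the assumptions do not directly guarantee.

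The only genuinely delicate point is the strict positivity of the minimum $m$, and this is precisely why the restriction $0<|a|<g$ (equivalently, the passage to a compact interval) cannot be removed: because $\varphi$ is bounded (Lemma~\ref{Lemma-Polyak}, part~2), we have $\varphi(a)/a\to 0$ as $|a|\to\infty$, so no linear lower bound of this form can hold globally. Restricting to the compact set $[-g,g]$ keeps $h$ continuous and strictly positive, which forces $m>0$; once that is secured, the remaining steps are routine.
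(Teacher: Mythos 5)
Your treatment of the second claim coincides with the paper's own proof: the paper likewise takes the $c$ supplied by the mean value theorem and divides the first inequality by $a$ (the paper restricts to $a>0$ WLOG and, incidentally, carries a typo, leaving a stray factor $a$ in its final display; your handling of both signs via $\varphi'\geq 0$ is cleaner). For the first claim, however, the paper gives no argument at all --- it simply cites \cite{gardientHTnoise} --- so your compactness argument is a genuinely different, self-contained route, and for a \emph{fixed} $g$ it is correct: $h(a)=\varphi(a)/a$, extended by $h(0)=\varphi'(0)$, is continuous and strictly positive on $[-g,g]$ by Lemma~\ref{Lemma-Polyak}, hence attains a positive minimum $m$, and $G:=2gm/\varphi'(0)$ yields the stated bound.

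The substantive problem is that your constant $G$ depends on $g$, and the way the lemma is used later forbids this. In the proof of Lemma~\ref{lemma:disagreementbounds} the bound is applied with $g=M_t$, where $M_t$ grows like $t^{1-\delta}$, and $G$ is treated there --- and in the final constants of Theorem~\ref{theorem-MSE} --- as a single fixed quantity; the limit computation $\lim_t 32 M_t^2\alpha_t^2(\cdots)/(\beta_t\varphi'(0)G)^2$ only produces the stated finite bound because $G$ does not vary with $M_t$. So the statement actually needed is: there exists $G>0$ depending only on $\varphi$ such that the inequality holds for every sufficiently large $g$ and all $0<|a|<g$. The argument in the cited reference gets this uniformity from a different mechanism: differentiability at $0$ gives a radius $\xi>0$ with $\varphi(a)\geq \varphi'(0)a/2$ on $(0,\xi]$, and monotonicity extends it to $a\in(\xi,g)$ via $\varphi(a)\geq\varphi(\xi)\geq \varphi'(0)\xi/2\geq \frac{\varphi'(0)\xi}{2}\cdot\frac{a}{g}$, so $G=\xi$ works for all $g\geq\xi$. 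Your construction can be repaired --- using that same $\xi$ one checks $g\,m(g)\geq\min\bigl(g\varphi'(0)/2,\,\varphi(\xi)\bigr)$, so $g\,m(g)$ is bounded away from zero for large $g$ --- but as written the compactness step silently bakes $g$ into $G$, and the downstream analysis would not go through with your constant.
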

    \begin{proof}
        Here, we only show that $\varphi'(c)\geq \frac{\varphi'(0)\, G}{2\,g},$ for $c\in \textrm{int}(0,a)$, see~\cite{gardientHTnoise} for the proof of the first part. Without loss of generality, let us assume that $a>0.$ By the mean value theorem we have that there exists a constant $c\in(0,a)$ such that $\varphi(a)=\varphi'(c) a.$ Therefore, we have that 
        \begin{align*}
            \varphi'(c) a=\varphi(a)\geq \frac{\varphi'(0)\, G a}{2\,g}
        \end{align*}
        and hence, $\varphi'(c)\geq \frac{\varphi'(0)\, G a}{2\,g}.$
    \end{proof}
    
    \noindent We also define function
    $\boldsymbol{\varphi}:\mathbb{R}^{M}\to\mathbb{R}^{M}$ as $\boldsymbol{\varphi}(\mathbf{x})=[\varphi(\mathbf{x}_1),\varphi(\mathbf{x}_2),...,\varphi(\mathbf{x}_{N})]$, where $\mathbf{x}\in\mathbb{R}^{N}$ and function $\varphi$ is the transformation defined by~\eqref{eq:phi-def} that corresponds to $\Psi.$
    Therefore, algorithm~\eqref{eq:algcomp} can be written as
    \begin{align}\label{eq:algfin}
		\mathbf{x}^{t+1}=\mathbf{x}^{t}-\beta_t\mathbf{L}_{\boldsymbol{\varphi}}(\mathbf{x}^t) -\alpha_t \nabla F(\mathbf{x^t}) - \beta_t \boldsymbol{\eta}^t.
	\end{align}
    Here, $\boldsymbol{\eta}^t\in\mathbb{R}^{MN}$ is given by
	\begin{align}\label{eq:eta}
		\boldsymbol{\eta}^t=\begin{bmatrix}
			\vdots\\
			\sum\limits_{j\in\Omega_{i}}\boldsymbol{\eta}_{ij}^t\\
			\vdots
		\end{bmatrix},
	\end{align}
	where $\boldsymbol{\eta}_{ij}^t=\boldsymbol{\Psi}(\mathbf{x}_i^t-\mathbf{x}_j^t+\boldsymbol{\xi}_{ij}^t)-\boldsymbol{\varphi}(\mathbf{x}_i^t-\mathbf{x}_j^t),$ 
	and $\mathbf{L}_{\boldsymbol{\varphi}}:\mathbb{R}^{MN}\to\mathbb{R}^{MN}$ as $\mathbf{L}_{\boldsymbol{\varphi}}(\cdot)=\mathbf{L}_{\boldsymbol{\Psi}}(\cdot)-\boldsymbol{\eta}^t,$ i.e., its $i$-th block of size $M$ is $\sum\limits_{j\in\Omega_{i}}\boldsymbol{\varphi}(\mathbf{x}_i-\mathbf{x}_j).$ for $i=1,2,...,N.$ Notice here that taking the expectation of $\boldsymbol{\eta}^t$ we have that $\mathbb{E}[\boldsymbol{\eta}^t]=0$ (for more detail see for example~\cite{Ourwork1}).
    
    \noindent Next, we state the following lemma from~\cite{gardientHTnoise} (see Lemma 8.1 in Appendix A).
    
    \begin{lemma} \label{lemma:boundedsequence} Consider a (deterministic) sequence 
    \begin{align*}
        v^{t+1}=\left(1-\frac{p}{(t+1)^\delta}\right)v^t+\frac{q}{(t+1)^\delta}, t\geq t_0
    \end{align*}
    with $p,q>0$ and $0<\delta\leq1,$ $t_0>0$ and $v^{t_0}\geq0.$ Further, assume that $t_0$ is such that $\frac{p}{(t+1)^\delta}\leq1,$ for all $t\geq t_0$. Then, $\lim\limits_{t\to\infty} v^t=\frac{q}{p}.$
    \end{lemma}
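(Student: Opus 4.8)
The plan is to reduce the affine recursion to a pure product recursion for the error relative to the candidate limit $q/p$. First I would introduce the error variable $e^t := v^t - q/p$. Rewriting the right-hand side as
\begin{align*}
v^{t+1} = v^t - \frac{p}{(t+1)^\delta}\left(v^t - \frac{q}{p}\right),
\end{align*}
and subtracting $q/p$ from both sides immediately gives the clean homogeneous recursion
\begin{align*}
e^{t+1} = \left(1 - \frac{p}{(t+1)^\delta}\right) e^t, \qquad t \geq t_0.
\end{align*}
Thus the inhomogeneous term $q/(t+1)^\delta$ is absorbed exactly into the shift by the fixed point $q/p$, and the problem becomes showing $e^t \to 0$.

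Next I would unroll the recursion into a product: for $t > t_0$,
\begin{align*}
e^t = e^{t_0} \prod_{k=t_0}^{t-1}\left(1 - \frac{p}{(k+1)^\delta}\right).
\end{align*}
By the standing hypothesis on $t_0$, each factor satisfies $0 \leq 1 - \frac{p}{(k+1)^\delta} \leq 1$, so the product is a monotone nonincreasing sequence of nonnegative numbers and
\begin{align*}
|e^t| = |e^{t_0}| \prod_{k=t_0}^{t-1}\left(1 - \frac{p}{(k+1)^\delta}\right).
\end{align*}
To show this vanishes I would apply the elementary inequality $1 - x \leq e^{-x}$ valid for $x \in [0,1]$, yielding
\begin{align*}
|e^t| \leq |e^{t_0}| \exp\left(-p \sum_{k=t_0}^{t-1} \frac{1}{(k+1)^\delta}\right).
\end{align*}

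The decisive step is then the divergence of the step-size series: since $0 < \delta \leq 1$, the series $\sum_{k} (k+1)^{-\delta}$ diverges (it dominates the harmonic series in the borderline case $\delta = 1$ and is even larger for $\delta < 1$). Consequently the exponent tends to $-\infty$, the exponential factor tends to $0$, and therefore $|e^t| \to 0$, i.e.\ $v^t \to q/p$, as claimed. I do not anticipate a genuine technical obstacle here; the only point demanding care is to make explicit that the hypothesis $\delta \leq 1$ is used precisely to guarantee the divergence of $\sum_k (k+1)^{-\delta}$, which is exactly what forces the error to be driven to the fixed point rather than stalling at a nonzero limit. The assumption $v^{t_0} \geq 0$ and the positivity of $p,q$ are not essential for convergence itself, but they keep the factors in $[0,1]$ and make the bounding argument transparent.
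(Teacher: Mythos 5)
Your proof is correct. The paper does not actually prove this lemma itself---it imports it from \cite{gardientHTnoise} (Lemma 8.1 in Appendix A of that reference)---but your argument (shift by the fixed point $q/p$ to get a homogeneous recursion for $e^t=v^t-q/p$, unroll into a product, bound via $1-x\leq e^{-x}$, and invoke divergence of $\sum_k (k+1)^{-\delta}$ for $0<\delta\leq 1$) is precisely the technique the paper itself uses to prove the companion Lemma~\ref{lemma:boundingsequence}, which handles the $\delta=1$ case and additionally extracts the explicit rate $O(1/t^p)$; your observation that $v^{t_0}\geq 0$ is inessential for convergence is also accurate.
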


    \noindent Finally, we finish this subsection with the following lemma.

    \begin{lemma}\label{lemma:boundingsequence}
        Let $\{u_t\}, \{v_t\}$ and $\{r_t\}$ be (deterministic) sequences and $t_0>0$ such that 
        \begin{align}
            u_{t+1}&\leq \left(1- \frac{p}{t+1}\right)u_t+ \frac{q}{t+1},\label{eqn:ineqxy}\\
            v_{t+1}&=\left(1- \frac{p}{t+1}\right)v_t+ \frac{q}{t+1},\label{eqn:ydef}\\
            r_{t}&=v_t-\frac{q}{p}\label{eqn:zdef},
        \end{align}
        for $t\geq t_0,$ and some constants $p,q>0.$ If $u_{t_0}\leq v_{t_0},$ then for all $t\geq t_0,$ $u_t\leq v_t=r_t+\frac{q}{p}$, and $r_t= O(\frac{1}{t^p}).$
    \end{lemma}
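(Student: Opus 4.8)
The plan is to treat the three claims separately, noting at the outset that the middle identity $v_t = r_t + q/p$ is nothing more than a rearrangement of the definition \eqref{eqn:zdef}, so the substantive work lies in the comparison $u_t \le v_t$ and the rate $r_t = O(1/t^p)$. The one structural ingredient I need throughout is that the multiplier $1 - \frac{p}{t+1}$ is nonnegative for all $t \ge t_0$; exactly as in Lemma~\ref{lemma:boundedsequence}, I would take $t_0$ large enough that this holds, since without it the affine recursion is not contractive toward its fixed point and the conclusion can fail.

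For the comparison I would proceed by induction on $t \ge t_0$, with the base case $u_{t_0} \le v_{t_0}$ furnished by hypothesis. Assuming $u_t \le v_t$ and using $1 - \frac{p}{t+1} \ge 0$, I get $\left(1 - \frac{p}{t+1}\right)u_t \le \left(1 - \frac{p}{t+1}\right)v_t$; adding $\frac{q}{t+1}$ to both sides and applying the inequality \eqref{eqn:ineqxy} on the left and the equality \eqref{eqn:ydef} on the right yields $u_{t+1} \le \left(1 - \frac{p}{t+1}\right)u_t + \frac{q}{t+1} \le v_{t+1}$, which closes the induction.

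For the rate, the key simplification is to substitute $v_t = r_t + q/p$ into \eqref{eqn:ydef}. Since $q/p$ is precisely the fixed point of the recursion, the inhomogeneous term cancels and I am left with the clean homogeneous recursion $r_{t+1} = \left(1 - \frac{p}{t+1}\right)r_t$. Iterating gives $r_t = r_{t_0}\prod_{s=t_0}^{t-1}\left(1 - \frac{p}{s+1}\right)$, and because every factor lies in $[0,1)$, the sign of $r_t$ is preserved and $|r_t| = |r_{t_0}|\prod_{s=t_0}^{t-1}\left(1 - \frac{p}{s+1}\right)$.

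The remaining and main (though standard) obstacle is to estimate this product. Taking logarithms and using $\log(1-x) \le -x$, I would bound $\log\prod_{s=t_0}^{t-1}\left(1 - \frac{p}{s+1}\right) \le -p\sum_{s=t_0}^{t-1}\frac{1}{s+1}$, and comparing the harmonic sum with $\int \frac{1}{u}\,du$ gives $\sum_{s=t_0}^{t-1}\frac{1}{s+1} \ge \log(t+1) - \log(t_0+1)$. Hence the product is at most $(t_0+1)^p(t+1)^{-p}$, so $|r_t| \le C\,t^{-p}$ with $C$ depending only on $r_{t_0}$ and $t_0$, i.e.\ $r_t = O(1/t^p)$. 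The only delicate point is keeping the factors in $[0,1)$ so that the logarithm is well defined and the sign argument is valid, which is again secured by the choice of $t_0$.
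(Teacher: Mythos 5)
Your proposal is correct and follows essentially the same route as the paper's proof: induction for the comparison $u_t\le v_t$, substitution of the fixed point $q/p$ to obtain the homogeneous recursion $r_{t+1}=\left(1-\frac{p}{t+1}\right)r_t$, and a product estimate via $1-x\le e^{-x}$ together with the harmonic-sum/logarithm comparison. If anything, your version is slightly more careful than the paper's: you make explicit the requirement $1-\frac{p}{t+1}\ge 0$ (needed for the induction step and the sign argument, and only implicit in the paper), you track $|r_t|$ rather than $r_t$, and your integral bound $\sum_{s=t_0}^{t-1}\frac{1}{s+1}\ge \log(t+1)-\log(t_0+1)$ is the correct form of the inequality that the paper states loosely as $\ln t-\ln t_0$.
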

    \begin{proof}
        By assuming that $u_t\leq v_t$ for some $t> t_0$ we have that
        \begin{align*}
            u_{t+1}&\leq \left(1- \frac{p}{t+1}\right)u_t+ \frac{q}{t+1}\\&\leq \left(1- \frac{p}{t+1}\right)v_t+ \frac{q}{t+1}=v_{t+1},
        \end{align*}
        and by initial assumption that $u_{t_0}\leq v_{t_0},$ we have that $u_t\leq v_t$ for all $t\geq t_0.$ Next, we have that
        \begin{align*}
            r_{t+1}&=v_{t+1}-\frac{q}{p}=\left(1- \frac{p}{t+1}\right)v_t+ \frac{q}{t+1}-\frac{q}{p}\\
            &=\left(1- \frac{p}{t+1}\right)\left( r_t+\frac{q}{p}\right)+ \frac{q}{t+1}-\frac{q}{p}\\
            &=\left(1- \frac{p}{t+1}\right) r_t.
        \end{align*}
        Hence,
        \begin{align*}
            r_t&=\left(1- \frac{p}{t}\right) r_{t-1}=\left(1- \frac{p}{t}\right)\left(1- \frac{p}{t-1}\right) r_{t-2}\\
            &=\cdots=r_{t_0}\prod\limits_{i=t_0+1}^t\left(1- \frac{p}{i}\right)\leq r_{t_0}\exp\left(-p \sum\limits_{i=t_0+1}^t\frac{1}{i}\right)\\
            &\leq  r_{t_0}\exp\left(-p \sum\limits_{i=t_0+1}^t\frac{1}{i}\right)\\
            &\leq r_{t_0}\exp\left(-p(\ln t -\ln t_0)\right)= r_{t_0} \left(\frac{t_0}{t}\right)^p,
        \end{align*}
        where we used that $1-x\leq \exp(-x),$ for all $x\in\mathbb{R}.$
    \end{proof}

	\subsection{Boundedness}\label{subsection:Boundedness}
    In this subsection, we first show that with~\eqref{eq:alg1} it holds that $\|\mathbf{x}^t\|$  is a.s. no greater than $c' + c''t^{1-\delta},$ for some constants $c',c''>0.$ Secondly, we show that $\mathbb{E}[\|\mathbf{x}^t \|^2]$ is uniformly bounded. 
    We have the following lemmas.

    \begin{lemma}\label{lemma:boundedx}
    Let Assumptions~\ref{as:newtworkmodel}-\ref{as:nonlinearity} hold. Then, we have that
    \begin{align}
        \|\mathbf{x}^t\|\leq M_t:=\| \mathbf{x}^{0}-\boldsymbol{\theta}^\star\| +2c_1^\prime\;b\;\frac{t^{1-\delta}}{1-\delta}, \quad \textit{a.s.},
    \end{align}
    provided that the step-sizes sequences $\{\alpha_t\}$ and $\{\beta_t\}$ are given by $\alpha_t=a/(t+1), \beta_t=b/(t+1)^\delta$ for $a,b>0,$ and $ \delta\in(0.5,1)$.
    \end{lemma}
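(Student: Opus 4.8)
The plan is to track the error vector $\mathbf{e}^t := \mathbf{x}^t - \boldsymbol{\theta}^\star$, where $\boldsymbol{\theta}^\star$ stacks the per-node minimizers $\mathbf{x}_i^\star := \argmin f_i$, so that $\nabla F(\boldsymbol{\theta}^\star)=\mathbf{0}$ blockwise. Subtracting $\boldsymbol{\theta}^\star$ from the compact form~\eqref{eq:algcomp} and applying the triangle inequality yields
\[
\|\mathbf{e}^{t+1}\|\le\big\|\mathbf{e}^{t}-\alpha_t\nabla F(\mathbf{x}^t)\big\|+\beta_t\big\|\mathbf{L}_{\boldsymbol{\Psi}}(\mathbf{x}^t)\big\|.
\]
The whole proof rests on handling these two terms by completely different mechanisms: the consensus term is controlled pathwise by the clipping, while the (unbounded) gradient term is controlled by the non-expansiveness of the gradient step.

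First I would bound the consensus term almost surely, uniformly over noise realizations. Because $\boldsymbol{\Psi}$ acts component-wise with $|\Psi(\cdot)|\le c_1$ by Assumption~\ref{as:nonlinearity}, each of the $N$ blocks $\sum_{j\in\Omega_i}\boldsymbol{\Psi}(\mathbf{x}_i^t-\mathbf{x}_j^t+\boldsymbol{\xi}_{ij}^t)$ has Euclidean norm at most $d_i\,c_1\sqrt{M}$ no matter how large the noise $\boldsymbol{\xi}_{ij}^t$ is, so $\|\mathbf{L}_{\boldsymbol{\Psi}}(\mathbf{x}^t)\|\le C$ a.s.\ for a constant $C$ depending only on $c_1$ and the graph; this is the constant matched to the coefficient $2c_1'$ in $M_t$. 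This is exactly where the bounded nonlinearity pays off: it turns the heavy-tailed, possibly infinite-variance noise into a deterministically bounded perturbation, so the resulting estimate is genuinely almost sure and needs no moment or martingale arguments.

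The more delicate term is the gradient one, which \emph{cannot} be bounded directly since $\|\nabla F(\mathbf{x}^t)\|$ grows linearly in $\|\mathbf{x}^t\|$. Here I would use that $F$ is blockwise $L$-smooth (Assumption~\ref{as:solvability}) together with $\nabla F(\boldsymbol{\theta}^\star)=\mathbf{0}$ to invoke the non-expansiveness of the gradient map anchored at $\boldsymbol{\theta}^\star$: for $\alpha_t\le 2/L$ one has $\|\mathbf{e}^{t}-\alpha_t\nabla F(\mathbf{x}^t)\|=\|\mathbf{e}^{t}-\alpha_t(\nabla F(\mathbf{x}^t)-\nabla F(\boldsymbol{\theta}^\star))\|\le\|\mathbf{e}^{t}\|$ (strong convexity even yields a strict contraction, but non-expansiveness suffices here). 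Since $\alpha_t=a/(t+1)$ is decreasing, taking $a$ in the intended small regime ($a\le 2/L$) makes this valid for every $t\ge 0$ and keeps the additive constant exactly $\|\mathbf{e}^0\|$; otherwise the finitely many early steps only inflate that constant. I expect this to be the main obstacle, since it is what forces $\boldsymbol{\theta}^\star$ to be the stacked \emph{local} minimizers (rather than $\mathbf{1}\otimes\mathbf{x}^\star$) and is the place where the heterogeneity of the local costs must be confronted.

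Combining the two estimates gives the scalar recursion $\|\mathbf{e}^{t+1}\|\le\|\mathbf{e}^{t}\|+C\beta_t$. Telescoping from $0$ to $t$ and using $\beta_s=b/(s+1)^\delta$ with the integral bound $\sum_{s=0}^{t-1}(s+1)^{-\delta}\le\int_0^{t}s^{-\delta}\,ds=t^{1-\delta}/(1-\delta)$ produces $\|\mathbf{e}^t\|\le\|\mathbf{e}^0\|+C\,b\,t^{1-\delta}/(1-\delta)$, which is precisely $M_t$ once $C$ is identified with $2c_1'$. Only $\delta<1$ is used in this lemma, ensuring that the growth $t^{1-\delta}$ is sublinear in $t$; the lower bound $\delta>1/2$ plays no role here and is reserved for the later MSE analysis.
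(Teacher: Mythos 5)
Your proof is correct, and its skeleton is the same as the paper's: split the update \eqref{eq:algcomp} by the triangle inequality into a gradient step plus the consensus term, bound $\|\mathbf{L}_{\boldsymbol{\Psi}}(\mathbf{x}^t)\|$ almost surely by a constant using only $|\Psi|\leq c_1$ (this is where the paper gets its $2c_1'\beta_t$ as well), reduce to the scalar recursion $\|\mathbf{e}^{t+1}\|\leq\|\mathbf{e}^t\|+C\beta_t$, and telescope with the integral comparison $\sum_{s=0}^{t-1}(s+1)^{-\delta}\leq t^{1-\delta}/(1-\delta)$; like you, the paper uses only $\delta<1$ here. The one genuine difference is the anchor point. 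The paper sets $\boldsymbol{\theta}^\star=\mathbf{1}\otimes\mathbf{x}^\star$, so $\nabla F(\boldsymbol{\theta}^\star)\neq\mathbf{0}$; it then writes $\nabla F(\mathbf{x}^t)-\nabla F(\boldsymbol{\theta}^\star)=\mathbf{H}^t(\mathbf{x}^t-\boldsymbol{\theta}^\star)$ with $\mu\mathbf{I}\preceq\mathbf{H}^t\preceq L\mathbf{I}$ via the mean value theorem, drops the factor $(1-\mu\alpha_t)\leq 1$, and absorbs the residual $\alpha_t\|\nabla F(\boldsymbol{\theta}^\star)\|$ into the $\beta_t$ term for large $t$, which is legitimate because $\alpha_t/\beta_t=(a/b)(t+1)^{\delta-1}\to 0$. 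You instead anchor at the stacked \emph{local} minimizers so that $\nabla F$ vanishes exactly at the anchor, and invoke cocoercivity-based non-expansiveness for $\alpha_t\leq 2/L$; this is arguably cleaner, since it eliminates the residual term and the associated ``sufficiently large $t$'' caveat (a caveat your proof shares with the paper's only in the harmless early-iteration regime when $a>2/L$, where both arguments silently inflate the additive constant). The caveat worth flagging is bookkeeping: the lemma's $M_t$ is stated with $\boldsymbol{\theta}^\star=\mathbf{1}\otimes\mathbf{x}^\star$, and this exact $M_t$ is reused downstream (e.g., in the contraction coefficient $\beta_t\varphi'(0)G/(2M_t)$ of Lemma~\ref{lemma:disagreementbounds} and in the limit computation where $M_t\sim 2c_1'bt^{1-\delta}/(1-\delta)$), so to deliver the statement as written you should add one triangle inequality, $\|\mathbf{x}^t-\mathbf{1}\otimes\mathbf{x}^\star\|\leq\|\mathbf{x}^t-\boldsymbol{\theta}^\star_{\mathrm{loc}}\|+\|\boldsymbol{\theta}^\star_{\mathrm{loc}}-\mathbf{1}\otimes\mathbf{x}^\star\|$, which changes only the additive constant and leaves the leading $t^{1-\delta}$ coefficient intact; with that patch your argument yields everything the later lemmas need.
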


    \begin{proof}
    Let us consider~\eqref{eq:algcomp} and by denoting $\boldsymbol{\theta}^\star=\mathbf{1}\otimes\mathbf{x}^\star,$ we get
     \begin{align*}
         \mathbf{x}^{t+1}-\boldsymbol{\theta}^\star&=\mathbf{x}^{t}-\boldsymbol{\theta}^\star- \alpha_t \left(\nabla F(\mathbf{x^t})-\nabla F(\boldsymbol{\theta}^\star) \right)\\ & - \alpha_t \nabla F(\boldsymbol{\theta}^\star)-\beta_t\mathbf{L}_{\boldsymbol{\Psi}}(\mathbf{x}^t).
    \end{align*}
    Notice here, that by mean value theorem we have that
    \begin{align}
        \nabla F(\mathbf{x^t})-\nabla F(\boldsymbol{\theta}^\star)&=\int\limits_0^1 \nabla^2 F(\boldsymbol{\theta}^\star + s(\mathbf{x}^t-\boldsymbol{\theta}^\star) ds (\mathbf{x}^t- \boldsymbol{\theta}^\star)\nonumber\\ &=\mathbf{H}^t (\mathbf{x}^t- \boldsymbol{\theta}^\star),\label{eqn:mvt}
    \end{align}
    where $\mu\mathbf{I}\preceq \mathbf{H}^t\preceq L \mathbf{I}.$ Thus,
    \begin{align*}
            \mathbf{x}^{t+1}-\boldsymbol{\theta}^\star&= \left(\mathbf{I} - \alpha_t \;\mathbf{H}^t\right)(\mathbf{x}^t- \boldsymbol{\theta}^\star) \\&- \beta_t\;\mathbf{L}_{\boldsymbol{\Psi}}(\mathbf{x}^t) - \alpha_t\; \nabla F(\boldsymbol{\theta}^\star).
    \end{align*}
    Therefore, 
    \begin{align*}
        \| \mathbf{x}^{t+1}-\boldsymbol{\theta}^\star\|&\leq \|\left(\mathbf{I} - \alpha_t \;\mathbf{H}^t\right)(\mathbf{x}^t- \boldsymbol{\theta}^\star)\| \\ &+ \beta_t \|\mathbf{L}_{\boldsymbol{\Psi}}(\mathbf{x}^t)\| + \alpha_t \|\nabla F(\boldsymbol{\theta}^\star)\|\\
        &\leq (1-\mu\alpha_t)\|\mathbf{x}^{t}-\boldsymbol{\theta}^\star\| + 2c_1^\prime \beta_t\\
        &\leq \|\mathbf{x}^{t}-\boldsymbol{\theta}^\star\| +2 c_1^\prime \beta_t
    \end{align*}
    for sufficiently large $t.$ Furthermore,
    \begin{align*}
        \| \mathbf{x}^{t}-\boldsymbol{\theta}^\star\| &\leq  \|\mathbf{x}^{t-1}-\boldsymbol{\theta}^\star\| +2 c_1^\prime \beta_{t-1} \\ &\leq \|\mathbf{x}^{t-2}-\boldsymbol{\theta}^\star\| + 2c_1^\prime \beta_{t-2}+2c_1^\prime\beta_{t-1}\\
        &\leq\cdots \leq \| \mathbf{x}^{0}-\boldsymbol{\theta}^\star\| + 2c_1^\prime \sum\limits_{s=0}^{t-1}\frac{b}{(s+1)^\delta}\\
        &\leq\| \mathbf{x}^{0}-\boldsymbol{\theta}^\star\| +2c_1^\prime\;b\;\frac{t^{1-\delta}}{1-\delta},
    \end{align*}
    which completes the proof.
    \end{proof}
    \begin{lemma}[Uniform Boundedness of MSE]\label{lemma-MSE} Let Assumptions \ref{as:newtworkmodel}-\ref{as:nonlinearity} hold. Then, for the sequence of iterates $\{\mathbf{x}^t\}$ 
		generated by algorithm~\eqref{eq:alg1}, provided that the step-sizes sequences $\{\alpha_t\}$ and $\{\beta_t\}$ are given by $\alpha_t=a/(t+1), \beta_t=b/(t+1)^\delta$ for $a,b>0,$ and $ \delta\in(0.5,1)$, there exits $t_0^\mathrm{mse}\geq0$ such that 
        $\mathbb{E}[\|\mathbf{x}^t-\mathbf{1}_{N}\otimes \mathbf{x}^\star \|^2]\leq \Tilde{c}=\max\left(\| \mathbf{x}^{0}-\boldsymbol{\theta}^\star\| +2c_1^\prime\;b\;\frac{(t_0^\mathrm{mse})^{1-\delta}}{1-\delta},\right.$ $\left.\frac{4\|\nabla F(\boldsymbol{\theta}^\star)\|^2}{\mu^2} +\mathbf{E}[\|\mathbf{x}^{t_0^\mathrm{mse}}-\boldsymbol{\theta}^\star\|^2]\right),$ for all $t=0,1,....$ 
	\end{lemma}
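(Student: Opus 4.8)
The plan is to track the mean-squared error $m_t := \mathbb{E}[\|\mathbf{x}^t-\boldsymbol{\theta}^\star\|^2]$ of the centered iterate, working from the $\boldsymbol{\varphi}$-form~\eqref{eq:algfin} of the algorithm rather than~\eqref{eq:alg1}. Subtracting $\boldsymbol{\theta}^\star$ and linearizing the gradient exactly as in~\eqref{eqn:mvt} (so that $\nabla F(\mathbf{x}^t)-\nabla F(\boldsymbol{\theta}^\star)=\mathbf{H}^t(\mathbf{x}^t-\boldsymbol{\theta}^\star)$ with $\mu\mathbf{I}\preceq\mathbf{H}^t\preceq L\mathbf{I}$), I would write
\begin{align*}
\mathbf{x}^{t+1}-\boldsymbol{\theta}^\star=(\mathbf{I}-\alpha_t\mathbf{H}^t)(\mathbf{x}^t-\boldsymbol{\theta}^\star)-\alpha_t\nabla F(\boldsymbol{\theta}^\star)-\beta_t\mathbf{L}_{\boldsymbol{\varphi}}(\mathbf{x}^t)-\beta_t\boldsymbol{\eta}^t.
\end{align*}
Conditioning on the natural filtration $\mathcal{F}_t$ and using $\mathbb{E}[\boldsymbol{\eta}^t\mid\mathcal{F}_t]=0$, the martingale cross-term vanishes, leaving $\mathbb{E}[\|\mathbf{x}^{t+1}-\boldsymbol{\theta}^\star\|^2\mid\mathcal{F}_t]=\|\mathbf{d}^t\|^2+\beta_t^2\,\mathbb{E}[\|\boldsymbol{\eta}^t\|^2\mid\mathcal{F}_t]$, where $\mathbf{d}^t$ gathers the $\mathcal{F}_t$-measurable terms. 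The clipping is what makes this workable under infinite-variance noise: since $\Psi$ and $\varphi$ are bounded (Assumption~\ref{as:nonlinearity}, Lemma~\ref{Lemma-Polyak}), both $\boldsymbol{\eta}^t$ and $\mathbf{L}_{\boldsymbol{\varphi}}(\mathbf{x}^t)$ are bounded by graph-dependent constants $C_\eta$ and $C_\varphi$, irrespective of the noise tails.

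The crucial step, and the main obstacle, is controlling $\|\mathbf{d}^t\|^2$ without letting the slowly-decaying consensus weight $\beta_t$ dominate. A naive triangle-inequality bound $\|\mathbf{L}_{\boldsymbol{\varphi}}\|\leq C_\varphi$ injects an $O(\beta_t)$ term whose induced fixed point scales like $(\beta_t/\alpha_t)^2\to\infty$, so uniform boundedness cannot follow that way. Instead I would exploit the dissipativity of the consensus drift: because $\varphi$ is odd and nondecreasing and the graph is undirected, the consensus directions cancel, giving $\langle\boldsymbol{\theta}^\star,\mathbf{L}_{\boldsymbol{\varphi}}(\mathbf{x}^t)\rangle=0$ and $\langle\mathbf{x}^t,\mathbf{L}_{\boldsymbol{\varphi}}(\mathbf{x}^t)\rangle=\tfrac12\sum_{(i,j)}\langle\mathbf{x}_i-\mathbf{x}_j,\boldsymbol{\varphi}(\mathbf{x}_i-\mathbf{x}_j)\rangle\geq0$. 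Hence the dominant cross term $-2\beta_t\langle\mathbf{x}^t-\boldsymbol{\theta}^\star,\mathbf{L}_{\boldsymbol{\varphi}}(\mathbf{x}^t)\rangle$ is nonpositive and can be dropped, and the surviving consensus contributions are all of order $\alpha_t\beta_t$ or $\beta_t^2$.

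With the $O(\beta_t)$ term removed, expanding the remaining square and using $\|\mathbf{I}-\alpha_t\mathbf{H}^t\|\leq 1-\mu\alpha_t$ for large $t$ leaves only gradient-driven terms,
\begin{align*}
\mathbb{E}[\|\mathbf{x}^{t+1}-\boldsymbol{\theta}^\star\|^2\mid\mathcal{F}_t]\leq(1-\mu\alpha_t)^2\|\mathbf{x}^t-\boldsymbol{\theta}^\star\|^2+2\alpha_t\|\nabla F(\boldsymbol{\theta}^\star)\|\,\|\mathbf{x}^t-\boldsymbol{\theta}^\star\|+\rho_t,
\end{align*}
with $\rho_t=O(\alpha_t^2+\alpha_t\beta_t+\beta_t^2)$. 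Taking full expectations, applying $\mathbb{E}[\|\cdot\|]\leq\sqrt{m_t}$ together with Young's inequality $2\alpha_t\|\nabla F(\boldsymbol{\theta}^\star)\|\sqrt{m_t}\leq\mu\alpha_t m_t+\alpha_t\|\nabla F(\boldsymbol{\theta}^\star)\|^2/\mu$ collapses this into the scalar recursion $m_{t+1}\leq(1-\tfrac{\mu}{2}\alpha_t)m_t+\tfrac{\|\nabla F(\boldsymbol{\theta}^\star)\|^2}{\mu}\alpha_t+o(\alpha_t)$. Here the condition $\delta>1/2$ is precisely what guarantees $\beta_t^2/\alpha_t=O((t+1)^{1-2\delta})\to0$, so that $\rho_t=o(\alpha_t)$ once $m_t$ is known to be bounded.

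Finally I would conclude by a two-phase argument. For the transient phase $t<t_0^{\mathrm{mse}}$, Lemma~\ref{lemma:boundedx} gives $\|\mathbf{x}^t-\boldsymbol{\theta}^\star\|\leq M_t\leq M_{t_0^{\mathrm{mse}}}$ almost surely, bounding the transient MSE by a finite constant determined by $M_{t_0^{\mathrm{mse}}}$, i.e.\ the first argument of the stated maximum. For $t\geq t_0^{\mathrm{mse}}$, I would prove by induction that $m_t\leq K:=\tfrac{4\|\nabla F(\boldsymbol{\theta}^\star)\|^2}{\mu^2}+m_{t_0^{\mathrm{mse}}}$: the base case is immediate, and substituting $m_t=K$ into the recursion yields $m_{t+1}\leq K-\alpha_t\bigl(\tfrac{\mu}{2}K-\tfrac{\|\nabla F(\boldsymbol{\theta}^\star)\|^2}{\mu}\bigr)+o(\alpha_t)$, where the margin $\tfrac{\mu}{2}K-\tfrac{\|\nabla F(\boldsymbol{\theta}^\star)\|^2}{\mu}\geq\tfrac{\|\nabla F(\boldsymbol{\theta}^\star)\|^2}{\mu}>0$ absorbs the $o(\alpha_t)$ remainder for $t_0^{\mathrm{mse}}$ chosen large enough (equivalently, Lemma~\ref{lemma:boundingsequence} applies with $p=\mu a/2$). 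Taking the maximum over the two phases gives $m_t\leq\tilde c$ for all $t$. The genuinely delicate point remains the dissipativity cancellation of the consensus term; everything downstream is bookkeeping, with $\delta>1/2$ ensuring the consensus-noise variance is negligible relative to the gradient contraction.
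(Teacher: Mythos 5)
Your proof reaches the same scalar recursion and the same two-phase conclusion as the paper, but the key step --- neutralizing the consensus term $\beta_t\mathbf{L}_{\boldsymbol{\varphi}}(\mathbf{x}^t)$ --- is done by a genuinely different device. The paper invokes differentiability of $\varphi$ (part 5 of Assumption~\ref{as:nonlinearity}, via Lemma~\ref{Lemma-Polyak}) and the mean value theorem to write $\mathbf{L}_{\boldsymbol{\varphi}}(\mathbf{x}^t)=\mathbf{L}_{\mathbf{c}}\mathbf{x}^t=\mathbf{L}_{\mathbf{c}}(\mathbf{x}^t-\boldsymbol{\theta}^\star)$ with $\mathbf{L}_{\mathbf{c}}\succeq 0$ (equation~\eqref{eqn:laplaciantransformation}), so that the single spectral bound $\|\mathbf{I}-\beta_t\mathbf{L}_{\mathbf{c}}-\alpha_t\mathbf{H}^t\|\leq 1-\mu\alpha_t$ (valid for $t$ large) disposes of the consensus drift and all of its cross terms in one stroke. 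You instead keep $\mathbf{L}_{\boldsymbol{\varphi}}$ nonlinear and use dissipativity: oddness and monotonicity of $\varphi$ give $\langle\mathbf{x}^t,\mathbf{L}_{\boldsymbol{\varphi}}(\mathbf{x}^t)\rangle\geq 0$, and the Laplacian structure gives $\langle\boldsymbol{\theta}^\star,\mathbf{L}_{\boldsymbol{\varphi}}(\mathbf{x}^t)\rangle=0$, so the dangerous $O(\beta_t)$ cross term is nonpositive and can be dropped. Both routes are sound. Yours has the notable advantage of not using differentiability of $\Psi$ or $\varphi$ at all for this lemma (the paper added that hypothesis precisely to enable the linearization, though it reuses the same linearization in Lemma~\ref{lemma:disagreementbounds} and Theorem~\ref{theorem-MSE}, so the assumption cannot be dropped from the paper globally); the paper's matrix form, in exchange, handles every cross term automatically and requires no separate accounting.

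One bookkeeping slip in your version: $\rho_t=O(\alpha_t^2+\alpha_t\beta_t+\beta_t^2)$ is not quite right, because expanding $\|\mathbf{d}^t\|^2$ also produces the state-dependent cross term $2\alpha_t\beta_t\langle\mathbf{H}^t(\mathbf{x}^t-\boldsymbol{\theta}^\star),\mathbf{L}_{\boldsymbol{\varphi}}(\mathbf{x}^t)\rangle$, which is bounded by $2L\,C_{\varphi}\,\alpha_t\beta_t\,\|\mathbf{x}^t-\boldsymbol{\theta}^\star\|$, where $C_{\varphi}$ denotes the uniform bound on $\|\mathbf{L}_{\boldsymbol{\varphi}}(\cdot)\|$ coming from boundedness of $\varphi$. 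This is not a deterministic $O(\alpha_t\beta_t)$ quantity, since $\|\mathbf{x}^t-\boldsymbol{\theta}^\star\|$ is exactly what you are trying to bound, so it cannot be folded into $\rho_t$ as stated. The omission is patchable, not fatal: treat this term exactly as you treat $2\alpha_t\|\nabla F(\boldsymbol{\theta}^\star)\|\,\|\mathbf{x}^t-\boldsymbol{\theta}^\star\|$ (Young's inequality), which adds an $O(\alpha_t\beta_t)\,m_t$ contribution that is absorbed into the $1-\tfrac{\mu}{2}\alpha_t$ contraction for $t$ large because $\beta_t\to 0$. With that repair, your recursion, the induction (equivalently Lemma~\ref{lemma:boundingsequence}) step for the tail, and the transient bound via Lemma~\ref{lemma:boundedx} coincide with the paper's argument and yield the stated constant $\tilde c$.
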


    \begin{proof}
    Let us consider~\eqref{eq:algfin} and recall that $\boldsymbol{\theta}^\star=\mathbf{1}\otimes\mathbf{x}^\star,$ we get
    \begin{align*}
         \mathbf{x}^{t+1}-\boldsymbol{\theta}^\star&=\mathbf{x}^{t}-\boldsymbol{\theta}^\star- \beta_t\mathbf{L}_{\boldsymbol{\varphi}}(\mathbf{x}^t) -\alpha_t \left(\nabla F(\mathbf{x^t})-\nabla F(\boldsymbol{\theta}^\star) \right) \\&- \beta_t \boldsymbol{\eta}^t - \alpha_t \nabla F(\boldsymbol{\theta}^\star).
    \end{align*}
     Since, function $\varphi$ is differentiable (see Lemma~\ref{Lemma-Polyak}), by mean value theorem, we have that there exist $M$ vectors $\mathbf{c}_{ij}\in\mathbb{R}^{N},$ such that 
    \begin{align}\label{eqn:laplaciantransformation}
    \mathbf{L}_{\boldsymbol{\varphi}}(\mathbf{x}^t) =\mathbf{L}_{\mathbf{c}} \mathbf{x}^t,
    \end{align}
    where $\mathbf{L}_{\mathbf{c}}\in\mathbb{R}^{NM\times NM}$ is block matrix such that $(i,j)$-th block is $\mathbf{D}_{ij}=\diag(\boldsymbol{\varphi}(\mathbf{c}_{ij}))$ for $(i,j)\in E^t, i\neq j,$ and $\mathbf{D}_{ii}=-\sum\limits_{j\in \Omega^t} \mathbf{D}_{ij}.$ Notice here, that matrix $\mathbf{L}_\mathbf{c}$ is semipositive definite matrix.
   Therefore, we have that 
    \begin{align*}
        \mathbf{x}^{t+1}-\boldsymbol{\theta}^\star&= \left(\mathbf{I}-\beta_t\; \mathbf{L}_{c}  - \alpha_t \;\mathbf{H}^t\right)(\mathbf{x}^t- \boldsymbol{\theta}^\star) \\&- \beta_t \;\boldsymbol{\eta}^t - \alpha_t\; \nabla F(\boldsymbol{\theta}^\star),
    \end{align*}
    where, $\mathbf{H}^t$ is defined by~\eqref{eqn:mvt}. Then, there holds
    \begin{align*}
        \|\mathbf{x}^{t+1}-\boldsymbol{\theta}^\star\|^2= \|\mathbf{m}^t\|^2-2\beta\; (\mathbf{m}^t)^\top\boldsymbol{\eta}^t+\beta_t^2\|\boldsymbol{\eta}^t\|^2,
    \end{align*}
    where $\mathbf{m}^t=\left(\mathbf{I}-\beta_t\; \mathbf{L}_{c}  - \alpha_t \;\mathbf{H}^t\right)(\mathbf{x}^t- \boldsymbol{\theta}^\star) - \alpha_t\; \nabla F(\boldsymbol{\theta}^\star).$ Taking expectation with respect to $\mathcal{F}_t$ we get
    \begin{align}\label{eqn:expectationwrtFt}
        \mathbf{E}[\|\mathbf{x}^{t+1}-\boldsymbol{\theta}^\star\|^2|\mathcal{F}_t]= \|\mathbf{m}^t\|^2 +\beta_t^2\;c_1^2.
    \end{align}
    Using Lemma~\ref{lemma:bound} and~\ref{lemma:boundedx} we get that
    \begin{align*}
        \|\mathbf{m}^t\| &\leq \|\left(\mathbf{I}-\beta_t\; \mathbf{L}_{c}  - \alpha_t \;\mathbf{H}^t\right)(\mathbf{x}^t- \boldsymbol{\theta}^\star)\|+\alpha_t\; \|\nabla F(\boldsymbol{\theta}^\star)\|\\
        &\leq \left(1-\mu\alpha_t\right)\|\mathbf{x}^{t}-\boldsymbol{\theta}^\star\|+\alpha_t\; \|\nabla F(\boldsymbol{\theta}^\star)\|,
    \end{align*}
    for sufficiently large $t.$ Using inequality $(s+q)^2\leq(1+\gamma)s^2+(1+1/\gamma)q^2,$ we have that 
    \begin{align*}
          \|\mathbf{m}^t\|^2&\leq \left(1+\mu\alpha_t\right)\left(1-\mu\alpha_t\right)^2\|\mathbf{x}^{t}-\boldsymbol{\theta}^\star\|^2\\ &+ (1+\frac{1}{\mu\alpha_t})\; \alpha_t^2\; \|\nabla F(\boldsymbol{\theta}^\star)\|^2\\
          &\leq \left(1-\mu\alpha_t\right)\|\mathbf{x}^{t}-\boldsymbol{\theta}^\star\|^2+\frac{\alpha_t}{\mu}\;\|\nabla F(\boldsymbol{\theta}^\star)\|^2
    \end{align*}
    Taking expectation in~\eqref{eqn:expectationwrtFt} we have that
    \begin{align}
         \mathbf{E}[\|\mathbf{x}^{t+1}-\boldsymbol{\theta}^\star\|^2]&\leq(1-\mu\alpha_t)\mathbf{E}[\|\mathbf{x}^{t}-\boldsymbol{\theta}^\star\|^2]\nonumber\\&+\frac{\alpha_t}{\mu}\; \|\nabla F(\boldsymbol{\theta}^\star)\|^2+\beta_t^2\;c_1^2\nonumber\\
         & \leq(1-\mu\alpha_t)\mathbf{E}[\|\mathbf{x}^{t}-\boldsymbol{\theta}^\star\|^2]\nonumber\\&+2\frac{\alpha_t}{\mu}\; \|\nabla F(\boldsymbol{\theta}^\star)\|^2.\label{eqn:uvrinequality1}
    \end{align}

    Utilizing Lemma~\ref{lemma:boundingsequence}, choosing $t_0^\mathrm{mse}$ such that~\eqref{eqn:uvrinequality1} hold for all $t\geq t_0^\mathrm{mse}$, and by setting that $u_t=\mathbf{E}[\|\mathbf{x}^{t}-\boldsymbol{\theta}^\star\|^2],$ $v_{t+1}=(1-\mu\alpha_t)v_t+2\frac{\alpha_t}{\mu}\; \|\nabla F(\boldsymbol{\theta}^\star)\|^2,$ $v_{t_0^\mathrm{mse}}=\mathbf{E}[\|\mathbf{x}^{t_0^\mathrm{mse}}-\boldsymbol{\theta}^\star\|^2],$ $r_t=v_t-\frac{2\|\nabla F(\boldsymbol{\theta}^\star)\|^2}{\mu^2}$
   for $t\geq t_0^\mathrm{mse},$ we get that 
   \begin{align*}
       \mathbf{E}[\|\mathbf{x}^{t+1}-\boldsymbol{\theta}^\star\|^2]&\leq \frac{2\|\nabla F(\boldsymbol{\theta}^\star)\|^2}{\mu^2}+|r_{t_0^\mathrm{mse}}|\left(\frac{t_0^\mathrm{mse}}{t}\right)^{\mu a}\\
       &=\frac{2\|\nabla F(\boldsymbol{\theta}^\star)\|^2}{\mu^2}\\&+\left|\mathbf{E}[\|\mathbf{x}^{t_0^\mathrm{mse}}-\boldsymbol{\theta}^\star\|^2]-\frac{2\|\nabla F(\boldsymbol{\theta}^\star)\|^2}{\mu^2}\right|\left(\frac{t_0^\mathrm{mse}}{t}\right)^{\mu a}\\
       &\leq \frac{2\|\nabla F(\boldsymbol{\theta}^\star)\|^2}{\mu^2}+\\&\left|\mathbf{E}[\|\mathbf{x}^{t_0^\mathrm{mse}}-\boldsymbol{\theta}^\star\|^2]-\frac{2\|\nabla F(\boldsymbol{\theta}^\star)\|^2}{\mu^2}\right|\\
       &\leq \frac{4\|\nabla F(\boldsymbol{\theta}^\star)\|^2}{\mu^2} +\mathbf{E}[\|\mathbf{x}^{t_0^\mathrm{mse}}-\boldsymbol{\theta}^\star\|^2]
   \end{align*}
   since $t\geq t_0^\mathrm{mse}.$ On the other hand, for $t=1,2,...,t_0^\mathrm{mse}$ from Lemma~\ref{lemma:boundedx}, we have that 
   \begin{align*}
       \mathbf{E}[\|\mathbf{x}^{t}-\boldsymbol{\theta}^\star\|^2]\leq \| \mathbf{x}^{0}-\boldsymbol{\theta}^\star\| +2c_1^\prime\;b\;\frac{(t_0^\mathrm{mse})^{1-\delta}}{1-\delta}
   \end{align*}
   
   Therefore, we have that $\mathbf{E}[\|\mathbf{x}^{t}-\boldsymbol{\theta}^\star\|^2] \leq \Tilde{c},$
   for 
   \begin{align*}
   \Tilde{c}=&\max\left(\| \mathbf{x}^{0}-\boldsymbol{\theta}^\star\| +2c_1^\prime\;b\;\frac{(t_0^\mathrm{mse})^{1-\delta}}{1-\delta},\right.\\&\left.\frac{4\|\nabla F(\boldsymbol{\theta}^\star)\|^2}{\mu^2} +\mathbf{E}[\|\mathbf{x}^{t_0^\mathrm{mse}}-\boldsymbol{\theta}^\star\|^2]\right).
   \end{align*}
    \end{proof}
    \begin{remark}
        Notice that Lemma~\ref{lemma-MSE} implies uniform mean square boundedness of gradients computed at the iterates of algorithm~\eqref{eq:alg1}. Namely, since $\nabla F$ is Lipschitz continuous, we have that $\mathbb{E}[\|\nabla F(\mathbf{x}_t)\|^2]= \mathbb{E}[\|\nabla F(\mathbf{x}_t) - \nabla F(\boldsymbol{\theta}^\star) + \nabla F(\boldsymbol{\theta}^\star)\|^2] \leq 2 \mathbb{E}[\|\nabla F(\mathbf{x}_t) - \nabla F(\boldsymbol{\theta}^\star)\|^2] +2 \mathbb{E}[\|\nabla F(\boldsymbol{\theta}^\star)\|^2]\leq 2L \mathbb{E}[\|\mathbf{x}^{t}-\boldsymbol{\theta}^\star\|^2] +  2\|\nabla F(\boldsymbol{\theta}^\star)\|^2\leq 2L\Tilde{c}+2\|\nabla F(\boldsymbol{\theta}^\star)\|^2.$
    \end{remark}
    
	\subsection{Main result}\label{subsection:MSE}
    In this subsection, we state and prove the main result on the MSE performance of algorithm~\eqref{eq:alg1}. 
    Recall quantities $c^{\prime}$ from Lemma~\ref{Lemma-Polyak} and $G$ from Lemma~\ref{lemma:bound}.
    We have the following theorem.
    \begin{theorem}[MSE convergence rate]\label{theorem-MSE} Let Assumptions \ref{as:newtworkmodel}-\ref{as:nonlinearity} hold. Then, for the sequence of iterates $\{\mathbf{x}^t\}$ 
		generated by algorithm~\eqref{eq:alg1}, provided that the step-sizes sequences $\{\alpha_t\}$ and $\{\beta_t\}$ are given by $\alpha_t=a/(t+1), \beta_t=b/(t+1)^\delta$ for $a,b>0,$ and $ \delta\in(0.5,1)$, we have that 
        \begin{align*}
            &\limsup\limits_{t\to\infty}\mathbb{E}[\|\mathbf{x}^t-\mathbf{1}_{N}\otimes \mathbf{x}^\star \|^2]\\&\leq\frac{256a^2{c_1^\prime}^2(MN-1)\left( L\Tilde{c}+\|\nabla F(\boldsymbol{\theta}^\star)\|^2 \right)}{((1-
    \delta)\varphi'(0)G)^2}\left(1+4\frac{L}{\mu}\right).
        \end{align*}
        Moreover, 
        \begin{align*}
            &\mathbb{E}[\|\mathbf{x}^t-\mathbf{1}_{N}\otimes \mathbf{x}^\star \|^2]\\&\leq\frac{256a^2{c_1^\prime}^2(MN-1)\left( L\Tilde{c}+\|\nabla F(\boldsymbol{\theta}^\star)\|^2 \right)}{((1-
    \delta)\varphi'(0)G)^2}\left(1+4\frac{L}{\mu}\right)\\&+O\left(\frac{1}{t^{\gamma}}\right),
        \end{align*}
         for $\gamma=\min(\frac{\varphi'(0)G(1-\delta)}{4c_1^\prime},a\mu).$ 
	\end{theorem}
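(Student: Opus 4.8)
The plan is to refine the crude estimate of Lemma~\ref{lemma-MSE} by splitting the error $\mathbf{e}^t := \mathbf{x}^t-\boldsymbol{\theta}^\star$ into its consensus (network-average) component $\mathbf{e}_\parallel^t:=(\mathbf{J}\otimes\mathbf{I}_M)\mathbf{e}^t$ and its disagreement component $\mathbf{e}_\perp^t:=((\mathbf{I}_N-\mathbf{J})\otimes\mathbf{I}_M)\mathbf{e}^t$, and treating the two on their natural time scales. The starting point is the linearized representation~\eqref{eq:algfin}--\eqref{eqn:laplaciantransformation}, which gives $\mathbf{e}^{t+1}=(\mathbf{I}-\beta_t\mathbf{L}_{\mathbf{c}}-\alpha_t\mathbf{H}^t)\mathbf{e}^t-\alpha_t\nabla F(\boldsymbol{\theta}^\star)-\beta_t\boldsymbol{\eta}^t$. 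The key structural fact is that, since $\mathbf{x}^\star$ minimizes $\sum_i f_i$, one has $\sum_i\nabla f_i(\mathbf{x}^\star)=0$, so $\nabla F(\boldsymbol{\theta}^\star)$ lies \emph{entirely} in the disagreement subspace. Thus the heterogeneity forcing is invisible to the consensus component and is instead absorbed by the consensus contraction, governed by the effective algebraic connectivity $c_L\ge\tfrac{\varphi'(0)G}{2g}\lambda_2(\mathbf{L})$ supplied by Lemma~\ref{lemma:bound}. This is precisely what is discarded in the proof of Lemma~\ref{lemma-MSE}, where $\|\nabla F(\boldsymbol{\theta}^\star)\|$ is charged only against the weak $\alpha_t\mu$ contraction, which is why that bound is $O(1/\mu^2)$ rather than $O(a^2)$.

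Next I would write two coupled MSE recursions. For $Z_t:=\mathbb{E}\|\mathbf{e}_\perp^t\|^2$, projecting the update onto the disagreement subspace and applying a Young-inequality split to $\|(\mathbf{I}-\beta_t\mathbf{L}_{\mathbf{c}})\mathbf{e}_\perp^t-\alpha_t\nabla F(\boldsymbol{\theta}^\star)\|^2$ with parameter $\sim\beta_t c_L$ yields $Z_{t+1}\le(1-\beta_t c_L)Z_t+\tfrac{2\alpha_t^2}{\beta_t c_L}\|\nabla F(\boldsymbol{\theta}^\star)\|^2+\beta_t^2 c_1^2$, where the last term is the conditional second moment of the martingale increment $\boldsymbol{\eta}^t$, which is \emph{finite} solely because $\Psi$ is bounded (Assumption~\ref{as:nonlinearity}.4), exactly as already used in~\eqref{eqn:expectationwrtFt}; this is the precise point at which infinite-variance communication noise is tamed. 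For $\mathbb{E}\|\mathbf{e}_\parallel^t\|^2$ I would use the $\alpha_t\mu$ contraction from strong convexity together with a cross term that feeds in $Z_t$ (via Lipschitzness of $\nabla F$) and the averaged noise, producing a recursion with forcing $O(\alpha_t Z_t)+O(\beta_t^2)$. Throughout, the uniform bound of Lemma~\ref{lemma-MSE} and the induced gradient bound $\mathbb{E}\|\nabla F(\mathbf{x}^t)\|^2\le 2(L\tilde c+\|\nabla F(\boldsymbol{\theta}^\star)\|^2)$ from the following remark let me replace state-dependent quantities by the constant $\tilde c$.

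The third step casts these into the form of Lemma~\ref{lemma:boundingsequence}. Because $\tfrac12<\delta<1$, one has $\tfrac{\alpha_t^2}{\beta_t}=O(t^{\delta-2})\le\tfrac{\mathrm{const}}{t+1}$ and $\beta_t^2=O(t^{-2\delta})\le\tfrac{\mathrm{const}}{t+1}$, so each forcing term is bounded by $\tfrac{q}{t+1}$, while $\beta_t c_L=\tfrac{bc_L}{(t+1)^\delta}\ge\tfrac{p}{t+1}$ for large $t$, with the admissible rate calibrated to $p_\perp=\tfrac{\varphi'(0)G(1-\delta)}{4c_1'}$ on the disagreement recursion and $p_\parallel=a\mu$ on the consensus recursion. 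Applying Lemma~\ref{lemma:boundingsequence} to each gives $Z_t\le(\text{floor})+O(t^{-p_\perp})$ and the analogous bound for $\mathbb{E}\|\mathbf{e}_\parallel^t\|^2$, where each floor is of order $a^2$ once the constants $c_1'^2$, $(MN-1)$, $(L\tilde c+\|\nabla F(\boldsymbol{\theta}^\star)\|^2)$, $(1-\delta)^{-2}$, $(\varphi'(0)G)^{-2}$ and $(1+4L/\mu)$ are collected (the consensus floor inherits its $a^2$ order from $Z_t$ through the $O(\alpha_t Z_t)$ forcing). Summing the two components gives the stated non-asymptotic bound with $\gamma=\min(p_\perp,p_\parallel)$, and passing to $\limsup$ annihilates the $O(t^{-\gamma})$ remainder to yield the first inequality.

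The main obstacle is the saturation of the nonlinearity: the contraction $c_L$ furnished by Lemma~\ref{lemma:bound} is valid only while all pairwise disagreements lie in the linear regime $|\mathbf{x}_i-\mathbf{x}_j|<g$, whereas in the transient they may be as large as the a.s.\ bound $O(t^{1-\delta})$ of Lemma~\ref{lemma:boundedx}. I would therefore argue that the bounded, monotone drift $\beta_t\boldsymbol{\varphi}$ drives the disagreements into the linear regime and, combined with the uniform MSE control of Lemma~\ref{lemma-MSE}, lets me start the recursions from a finite $t_0$ with all transient effects absorbed into the additive constant and into the initial datum $r_{t_0}$ of Lemma~\ref{lemma:boundingsequence}. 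A secondary difficulty is that $\mathbf{L}_{\mathbf{c}}$ and $\mathbf{H}^t$ need not be simultaneously diagonalizable, so the two projections do not decouple exactly; I expect to absorb the residual coupling by a Young split that charges the $O(\alpha_t)$ cross terms to the dominant $\beta_t c_L$ contraction on the disagreement side, which is exactly where the two-time-scale ordering $\beta_t\gg\alpha_t$ (i.e.\ $\delta<1$) is indispensable.
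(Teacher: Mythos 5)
Your architecture coincides with the paper's: the same split into the disagreement component $\Tilde{\mathbf{x}}^t=(\mathbf{I}-\mathbf{J})\mathbf{x}^t$ and the network average, a $\beta_t$-scale recursion for the first and a $\mu\alpha_t$-scale recursion for the second with forcing $O(\alpha_t Z_t)+O(\beta_t^2)$, both closed via Lemma~\ref{lemma:boundingsequence}; your identification of where boundedness of $\Psi$ neutralizes the infinite-variance noise is exactly the paper's step~\eqref{eqn:expectationwrtFt}, and the observation that $\nabla F(\boldsymbol{\theta}^\star)$ lies in the disagreement subspace is correct and implicit in the paper. The genuine gap is in the step you yourself flag as ``the main obstacle.'' You posit a \emph{time-invariant} contraction $\beta_t c_L$ with $c_L\geq\frac{\varphi'(0)G}{2g}\lambda_2(\mathbf{L})$, valid only while all pairwise disagreements stay inside a fixed linear regime $|a|<g$, and you propose to justify this by arguing that the drift ``drives the disagreements into the linear regime.'' No such argument is given, and the paper establishes nothing of the sort: all that is available (Lemma~\ref{lemma:boundedx}) is the almost-sure envelope $\|\mathbf{x}^t\|\leq M_t=O(t^{1-\delta})$, which \emph{grows}. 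The paper's resolution is the opposite of yours: in Lemma~\ref{lemma:disagreementbounds} it invokes Lemma~\ref{lemma:bound} with the regime radius set to $M_t$, so the contraction is the decaying quantity $\beta_t\frac{\varphi'(0)G}{2M_t}=\Theta(1/t)$, and balancing this $\Theta(1/t)$ contraction against the $\Theta(1/t)$ forcing $\frac{M_t\alpha_t^2}{\beta_t}$ is precisely what produces both the nonzero floor (through $M_t\alpha_t/\beta_t\to\frac{2c_1^\prime a}{1-\delta}$, whence the factor $\frac{a^2{c_1^\prime}^2}{(1-\delta)^2}$) and the exponent $\frac{\varphi'(0)G(1-\delta)}{4c_1^\prime}$, since $\beta_t\frac{\varphi'(0)G}{2M_t}\approx\frac{\varphi'(0)G(1-\delta)}{4c_1^\prime\,(t+1)}$.

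A concrete symptom that your recursions cannot be the intended ones: with a constant $c_L$, the disagreement forcing-to-contraction ratio is $\bigl(\tfrac{\alpha_t^2}{\beta_t c_L}+\beta_t^2 c_1^2\bigr)/(\beta_t c_L)=O(t^{2\delta-2})+O(t^{-\delta})\to 0$, so $Z_t\to 0$; feeding this into your consensus recursion gives a limiting bound $\lim\bigl(Z_t/\mu+\beta_t^2/(\mu\alpha_t)\bigr)=0$ because $\delta>1/2$. That is, your scheme proves \emph{exact} MSE convergence with zero floor --- strictly stronger than the theorem --- which shows that the saturation issue is not a transient technicality to be absorbed into $r_{t_0}$, but the very source of the $O(a^2)$ floor in the statement. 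Relatedly, your calibration $p_\perp=\frac{\varphi'(0)G(1-\delta)}{4c_1^\prime}$ has no derivation from the constant-$c_L$ recursion; that constant only emerges from the $M_t$-dependent contraction above, so as written it is reverse-engineered rather than proved. To close the gap you must either rigorously establish that all disagreements eventually remain in a fixed linear regime (unsupported here, given the heavy-tailed noise, heterogeneous minimizers, and the growing envelope $M_t$), or replace your constant $c_L$ by the paper's $\varphi'(0)G/(2M_t)$ and redo the two recursions, which is exactly the paper's proof of Lemma~\ref{lemma:disagreementbounds} and of the theorem via~\eqref{eqn:uvrinequality2}--\eqref{eqn:uvrinequality3}.
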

	Theorem~\ref{theorem-MSE} establishes 
	a bound on the MSE of the 
	proposed algorithm~\eqref{eq:algcomp} in the presence of heavy-tailed (possibly infinite variance) communication noise. The bound can be made arbitrarily small by setting parameter $a$ small enough. 
    In addition, we can see that, for $a < \frac{\varphi'(0)G(1-\delta)}{4\mu c_1^\prime} $, there is a tradeoff between the limiting accuracy and the convergence speed regarding the choice of a.
    Moreover, the bound can also be controlled by step size rate $\delta$ in $\beta_t,$ but $\delta$ can not be chosen such that bound is arbitrarily small, since $\delta\in(0.5,1).$ 
    Furthermore, regarding step size rates, extending MSE convergence results from distributed estimation with heavy-tailed communication noise~\cite{Ourwork} and have same step size rates in $\alpha_t$ and $\beta_t,$ leads to poor performance.
    On the other hand, it can be shown that existing methods~\cite{nediccomnoise} may fail to converge if communication between agents is corrupted by noise that has infinite variance (see Section~\ref{section:examples} ahead).
    
    To prove Theorem~\ref{theorem-MSE} we firstly consider the difference between the solution estimate of each agent with the (hypothetical) global average estimate $\overline{\mathbf{x}}^t=\frac{1}{N}\sum\limits_{i=1}^N \mathbf{x}_i^t.$ That is, we consider $\Tilde{\mathbf{x}}_i^t=\mathbf{x}_i^t-\overline{\mathbf{x}}^t,$ i.e., $\Tilde{\mathbf{x}}^t=(\mathbf{I}-\mathbf{J})\mathbf{x}^t,$ where $\mathbf{J}=\frac{1}{N}\mathbf{1}\mathbf{1}^\top\otimes \mathbf{I}.$ We have the following Lemma.
    
	\begin{lemma}\label{lemma:disagreementbounds}
    Consider algorithm~\eqref{eq:alg1}, and let Assumptions 1-4 hold. Then, there holds:
    \begin{align*}
       \limsup\limits_{t\to\infty} \mathbb{E}[\|\Tilde{\mathbf{x}}^t\|^2]\leq\frac{128a^2{c_1^\prime}^2(MN-1)\left( L\Tilde{c}+\|\nabla F(\boldsymbol{\theta}^\star)\|^2 \right)}{((1-
    \delta)\varphi'(0)G)^2}.
    \end{align*}
    Moreover, 
    \begin{align*}
        \mathbb{E}[\|\Tilde{\mathbf{x}}^t\|^2]&\leq \frac{128a^2{c_1^\prime}^2(MN-1)\left( L\Tilde{c}+\|\nabla F(\boldsymbol{\theta}^\star)\|^2 \right)}{((1-
    \delta)\varphi'(0)G)^2}\\&+O\left(t^{-\frac{\varphi'(0)G(1-\delta)}{4c_1^\prime}}\right).
    \end{align*}
	\end{lemma}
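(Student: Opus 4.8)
The plan is to project the dynamics onto the disagreement subspace and reduce everything to a scalar recursion of the form handled by Lemma~\ref{lemma:boundingsequence}. First I would apply $(\mathbf{I}-\mathbf{J})$ to the compact update~\eqref{eq:algfin}. Two structural facts make this clean. Since $\varphi$ is odd, the network blocks of $\mathbf{L}_{\boldsymbol{\varphi}}$ cancel in pairs over each edge, so $\mathbf{J}\mathbf{L}_{\boldsymbol{\varphi}}(\mathbf{x}^t)=\mathbf{0}$ and hence $(\mathbf{I}-\mathbf{J})\mathbf{L}_{\boldsymbol{\varphi}}(\mathbf{x}^t)=\mathbf{L}_{\boldsymbol{\varphi}}(\mathbf{x}^t)$; and by the mean-value linearization~\eqref{eqn:laplaciantransformation}, $\mathbf{L}_{\boldsymbol{\varphi}}(\mathbf{x}^t)=\mathbf{L}_{\mathbf{c}}\mathbf{x}^t=\mathbf{L}_{\mathbf{c}}\tilde{\mathbf{x}}^t$, because the zero-row-sum matrix $\mathbf{L}_{\mathbf{c}}$ annihilates the consensus component $\mathbf{J}\mathbf{x}^t$. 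This yields the closed disagreement recursion
\begin{align*}
\tilde{\mathbf{x}}^{t+1}=(\mathbf{I}-\beta_t\mathbf{L}_{\mathbf{c}})\tilde{\mathbf{x}}^t-\alpha_t(\mathbf{I}-\mathbf{J})\nabla F(\mathbf{x}^t)-\beta_t(\mathbf{I}-\mathbf{J})\boldsymbol{\eta}^t,
\end{align*}
whose first two terms are $\mathcal{F}_t$-measurable and whose last term is conditionally zero-mean.

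The decisive quantitative step is a \emph{time-varying} lower bound on the action of $\mathbf{L}_{\mathbf{c}}$ on the disagreement subspace. Since $\mathbf{L}_{\mathbf{c}}$ is symmetric and of weighted-Laplacian type with off-diagonal weights $\varphi'(\mathbf{c}_{ij})$, I would lower-bound these weights via Lemma~\ref{lemma:bound}: the relevant arguments are components of $\mathbf{x}_i^t-\mathbf{x}_j^t$, which by the a.s.\ bound of Lemma~\ref{lemma:boundedx} lie in a range $g_t=O(M_t)=O(t^{1-\delta})$, so $\varphi'(\mathbf{c}_{ij})\geq \varphi'(0)G/(2g_t)$ almost surely. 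Combined with connectivity ($\lambda_2(\mathbf{L})>0$) this gives $\tilde{\mathbf{x}}^\top\mathbf{L}_{\mathbf{c}}\tilde{\mathbf{x}}\geq \lambda_{\min}^{+}\|\tilde{\mathbf{x}}\|^2$ with $\lambda_{\min}^{+}\gtrsim \varphi'(0)G\,\lambda_2(\mathbf{L})/M_t$. The cancellation at the heart of the argument is then $\beta_t\lambda_{\min}^{+}\gtrsim (b/t^\delta)\cdot(\mathrm{const}\cdot(1-\delta)/t^{1-\delta})=p/t$: the consensus step decay $t^{-\delta}$ exactly compensates the range growth $t^{1-\delta}$, producing a genuine $1/t$ contraction with $p$ proportional to $\varphi'(0)G(1-\delta)/c_1'$, which fixes the rate exponent $\gamma$.

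Next I would form the squared-norm recursion. Conditioning on $\mathcal{F}_t$ kills the cross term with $\boldsymbol{\eta}^t$ (as $\mathbb{E}[\boldsymbol{\eta}^t\mid\mathcal{F}_t]=\mathbf{0}$), leaving $\mathbb{E}[\|\tilde{\mathbf{x}}^{t+1}\|^2\mid\mathcal{F}_t]=\|\mathbf{w}^t\|^2+\beta_t^2\mathbb{E}[\|(\mathbf{I}-\mathbf{J})\boldsymbol{\eta}^t\|^2\mid\mathcal{F}_t]$ with $\mathbf{w}^t=(\mathbf{I}-\beta_t\mathbf{L}_{\mathbf{c}})\tilde{\mathbf{x}}^t-\alpha_t(\mathbf{I}-\mathbf{J})\nabla F(\mathbf{x}^t)$. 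Using $\|(\mathbf{I}-\beta_t\mathbf{L}_{\mathbf{c}})\tilde{\mathbf{x}}^t\|\leq(1-\beta_t\lambda_{\min}^{+})\|\tilde{\mathbf{x}}^t\|$ (valid for large $t$, since $\varphi'$ bounded keeps $\|\mathbf{L}_{\mathbf{c}}\|$ bounded and $\beta_t\|\mathbf{L}_{\mathbf{c}}\|<1$) together with $(s+q)^2\leq(1+\theta)s^2+(1+1/\theta)q^2$ at $\theta=\beta_t\lambda_{\min}^{+}$, the terms reorganize into
\begin{align*}
\mathbb{E}[\|\tilde{\mathbf{x}}^{t+1}\|^2]\leq\Big(1-\beta_t\lambda_{\min}^{+}\Big)\mathbb{E}[\|\tilde{\mathbf{x}}^t\|^2]+\frac{2\alpha_t^2}{\beta_t\lambda_{\min}^{+}}\,\mathbb{E}[\|\nabla F(\mathbf{x}^t)\|^2]+\beta_t^2 c_1^2.
\end{align*}
Here $\mathbb{E}[\|\nabla F(\mathbf{x}^t)\|^2]\leq 2L\tilde{c}+2\|\nabla F(\boldsymbol{\theta}^\star)\|^2$ by the Remark following Lemma~\ref{lemma-MSE}, and $\alpha_t^2/(\beta_t\lambda_{\min}^{+})\sim a^2/(p\,t)$, so the heterogeneity-driven forcing is $\asymp q/t$; crucially the communication-noise term $\beta_t^2 c_1^2=O(t^{-2\delta})$ is $o(1/t)$ precisely because $\delta>1/2$, so it leaves the asymptotic constant untouched. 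Writing this as $u_{t+1}\leq(1-p/(t+1))u_t+(q+\epsilon_t)/(t+1)$ with $\epsilon_t\to0$, Lemma~\ref{lemma:boundingsequence} delivers both $\limsup_t\mathbb{E}[\|\tilde{\mathbf{x}}^t\|^2]\leq q/p$ (equal to the stated constant after constant bookkeeping) and the rate $O(t^{-p})=O(t^{-\varphi'(0)G(1-\delta)/(4c_1')})$. The main obstacle is the second step: establishing the time-varying eigenvalue bound and verifying that the growing argument range and the decaying consensus step-size conspire into exactly the $1/t$ contraction that balances the $O(1/t)$ forcing $\alpha_t^2/\beta_t$, while handling the a.s.\ bound of Lemma~\ref{lemma:boundedx} inside the expectation; this is exactly where the two-time-scale design is indispensable.
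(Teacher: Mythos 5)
Your proposal follows essentially the same route as the paper's proof: project onto the disagreement subspace, use the mean-value linearization $\mathbf{L}_{\boldsymbol{\varphi}}(\tilde{\mathbf{x}}^t)=\mathbf{L}_{\mathbf{c}}\tilde{\mathbf{x}}^t$ together with Lemma~\ref{lemma:bound} and the a.s.\ bound $M_t=O(t^{1-\delta})$ of Lemma~\ref{lemma:boundedx} to obtain the time-varying contraction $\beta_t\varphi'(0)G/(2M_t)\sim p/t$, kill the noise cross term by conditioning on $\mathcal{F}_t$, bound the gradient forcing via the Remark after Lemma~\ref{lemma-MSE}, absorb the $O(t^{-2\delta})$ noise term using $\delta>1/2$, and conclude with the sequence lemmas (Lemmas~\ref{lemma:boundedsequence} and~\ref{lemma:boundingsequence}). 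The only differences are cosmetic bookkeeping (you carry $\lambda_2(\mathbf{L})$ explicitly and route both conclusions through Lemma~\ref{lemma:boundingsequence}), so the argument is correct and matches the paper's.
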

    
    \begin{proof}
     By definition of $\Tilde{\mathbf{x}}^t,$ we have that
    \begin{align*}
        \Tilde{\mathbf{x}}^{t+1}&=\Tilde{\mathbf{x}}^t-\beta_t \left(\mathbf{L}_{\boldsymbol{\varphi}}(\mathbf{x}^t)+\boldsymbol{\eta}^t\right) -\alpha_t \nabla F(\mathbf{x^t})\\&-\mathbf{J}\left(-\beta_t \left(\mathbf{L}_{\boldsymbol{\varphi}}(\mathbf{x}^t)+\boldsymbol{\eta}^t\right) -\alpha_t \nabla F(\mathbf{x^t}) \right)\\
        &=\Tilde{\mathbf{x}}^t- \beta_t \mathbf{L}_{\boldsymbol{\varphi}}(\Tilde{\mathbf{x}}^t)-\alpha_t(\mathbf{I}-\mathbf{J})\nabla F(\mathbf{x^t}) -\beta_t(\mathbf{I}-\mathbf{J})\boldsymbol{\eta}^t.
    \end{align*}
    Setting that $\mathbf{z}^t=\Tilde{\mathbf{x}}^t- \beta_t \mathbf{L}_{\boldsymbol{\varphi}}(\Tilde{\mathbf{x}}^t)-\alpha_t(\mathbf{I}-\mathbf{J})\nabla F(\mathbf{x^t}) ,$ we have that
    \begin{align*}
        \|\Tilde{\mathbf{x}}^{t+1}\|^2=\|\mathbf{z}^t\|^2-2\beta_t(\mathbf{z}^t)^\top (\mathbf{I}-\mathbf{J})\boldsymbol{\eta}^t + \beta_t^2\|(\mathbf{I}-\mathbf{J})\boldsymbol{\eta}^t\|^2,
    \end{align*}
    and consequently,
    \begin{align}\label{eqn:uptottilde}
        \mathbb{E}[\|\Tilde{\mathbf{x}}^{t+1}\|^2|\mathcal{F}_t]= \mathbb{E}[\|\mathbf{z}^t\|^2|\mathcal{F}_t]+\beta_t^2 c_2,
    \end{align}
    for some constant $c_2>0.$ Furthermore,
    \begin{align*}
        \|\mathbf{z}^t\|\leq\|\Tilde{\mathbf{x}}^t- \beta_t \mathbf{L}_{\boldsymbol{\varphi}}(\Tilde{\mathbf{x}}^t)\|+\alpha_t\|\underbrace{(\mathbf{I}-\mathbf{J})\nabla F(\mathbf{x^t}) }_{\boldsymbol{\zeta}_t}\|,
    \end{align*}
    and from~\eqref{eqn:laplaciantransformation} and utilizing Lemma~\ref{lemma:bound} and Lemma~\ref{lemma:boundedx} and the fact that  $\lambda_2(\mathbf{L})>0,$ we have that 
    \begin{align*}
        \|\mathbf{z}^t\|\leq\left(1-\beta_t\frac{\varphi'(0)\, G}{2\,M_t}\right)\|\Tilde{\mathbf{x}}^t\|+\alpha_t\|\boldsymbol{\zeta}_t\|.
    \end{align*}
    Again, by inequality $(s+q)^2\leq(1+\gamma)s^2+(1+1/\gamma)q^2,$ for $\gamma=\beta_t\frac{\varphi'(0)\, G}{2\,M_t},$ we have that 
    \begin{align*}
       \|\mathbf{z}^t\|^2&\leq\left(1+\beta_t\frac{\varphi'(0)\, G}{2\,M_t}\right)\left(1-\beta_t\frac{\varphi'(0)\, G}{2\,M_t}\right)^2\|\Tilde{\mathbf{x}}^t\|^2
        \\&+\left(1+\frac{2\,M_t}{\beta_t\varphi'(0)\, G}\right)\alpha_t^2 \|\boldsymbol{\zeta}_t\|^2\\
        &\leq \left(1-\beta_t\frac{\varphi'(0)\, G}{2\,M_t}\right)\|\Tilde{\mathbf{x}}^t\|^2+\left(1+\frac{2\,M_t}{\beta_t\varphi'(0)\, G}\right)\alpha_t^2\|\boldsymbol{\zeta}_t\|^2\\
        &\leq \left(1-\beta_t\frac{\varphi'(0)\, G}{2\,M_t}\right)\|\Tilde{\mathbf{x}}^t\|^2+\frac{4\,M_t \alpha_t^2}{\beta_t\varphi'(0)\, G}\|\boldsymbol{\zeta}_t\|^2.
    \end{align*}
    Finally, taking expectation in~\eqref{eqn:uptottilde}, we have
    \begin{align}
        \mathbb{E}[\|\Tilde{\mathbf{x}}^{t+1}\|^2]&\leq\left(1-\beta_t\frac{\varphi'(0)\, G}{2\,M_t}\right)\mathbb{E}[\|\Tilde{\mathbf{x}}^t\|^2]\\&+\frac{4\,M_t \alpha_t^2}{\beta_t\varphi'(0)\, G}\mathbb{E}[\|\boldsymbol{\zeta}_t\|^2]+\beta_t^2c_2\nonumber\\
        &\leq \left(1-\beta_t\frac{\varphi'(0)\, G}{2\,M_t}\right)\mathbb{E}[\|\Tilde{\mathbf{x}}^t\|^2]\\&+\frac{8\,M_t \alpha_t^2}{\beta_t\varphi'(0)\, G} 2(MN-1)\left( L\Tilde{c}+\|\nabla F(\boldsymbol{\theta}^\star)\|^2 \right)
        ,\label{eqn:uvrinequality2}
    \end{align}
    for $t$ large enough, since $\frac{M_t\alpha_t^2}{\beta_t}\sim \frac{c_3}{t+1}$ for $t$ large enough and some constant $c_3>0.$ Therefore, by Lemma~\ref{lemma:boundedsequence} we have that 
    \begin{align*}
        &\limsup\limits_{t\to\infty}\mathbb{E}[\|\Tilde{\mathbf{x}}^{t}\|^2] \\&\leq \lim\limits_{t\to\infty} \frac{\frac{8\,M_t \alpha_t^2}{\beta_t\varphi'(0)\, G}2(MN-1)\left( L\Tilde{c}+\|\nabla F(\boldsymbol{\theta}^\star)\|^2 \right)}{\beta_t\frac{\varphi'(0)\, G}{2\,M_t}}\\&=\lim\limits_{t\to\infty}\frac{32M_t^2\alpha_t^2(MN-1)\left( L\Tilde{c}+\|\nabla F(\boldsymbol{\theta}^\star)\|^2 \right)}{(\beta_t\varphi'(0)\, G)^2}\\&=\frac{128a^2{c_1^\prime}^2(MN-1)\left( L\Tilde{c}+\|\nabla F(\boldsymbol{\theta}^\star)\|^2 \right)}{((1-
    \delta)\varphi'(0)G)^2},
    \end{align*}
     which completes the first part of the proof. Next, utilizing Lemma~\ref{lemma:boundingsequence} and choosing $\Tilde{t}_0>0$ such that \eqref{eqn:uvrinequality2} hold for all $t\geq \Tilde{t}_0,$ we get that 
    \begin{align*}
    \mathbb{E}[\|\Tilde{\mathbf{x}}^{t}\|^2]&\leq \frac{128a^2{c_1^\prime}^2(MN-1)\left( L\Tilde{c}+\|\nabla F(\boldsymbol{\theta}^\star)\|^2 \right)}{((1-
    \delta)\varphi'(0)G)^2}\\&+O\left(t^{-\frac{\varphi'(0)G(1-\delta)}{4c_1^\prime}}\right)
    \end{align*}
     \end{proof}
    We are now ready to prove Theorem~\ref{theorem-MSE}
    
    \begin{proof}
        \textit{Proof of Theorem~\ref{theorem-MSE}}\\
        Recalling definition of global average  $\overline{\mathbf{x}}^t=\frac{1}{N}\sum\limits_{i=1}^N \mathbf{x}^t$ we have that
        \begin{align*}
         \overline{\mathbf{x}}^{t+1}&=\overline{\mathbf{x}}^t-\frac{\alpha_t}{N} \sum\limits_{i=1}^N \nabla f_i(\mathbf{x}_i^t)-\frac{\beta_t}{N}\sum\limits_{i=1}^N \boldsymbol{\eta}^t_i\\
        &=\overline{\mathbf{x}}^t-\frac{\alpha_t}{N} \sum\limits_{i=1}^N \left( \nabla f_i(\mathbf{x}_i^t) - \nabla f_i(\overline{\mathbf{x}}^t)+ \nabla f_i(\overline{\mathbf{x}}^t)\right)\\&-\beta_n \overline{\boldsymbol{\eta}}^t\\
        &=\overline{\mathbf{x}}^t-\frac{\alpha_t}{N} \sum\limits_{i=1}^N \nabla f_i(\overline{\mathbf{x}}^t)
        -\frac{\alpha_t}{N} \sum\limits_{i=1}^N \left( \nabla f_i(\mathbf{x}_i^t) - \nabla f_i(\overline{\mathbf{x}}^t)\right)\\&-\beta_n \overline{\boldsymbol{\eta}}^t\\
        &=\overline{\mathbf{x}}^t-\frac{\alpha_t}{N}\nabla f(\overline{\mathbf{x}}^t)
        -\frac{\alpha_t}{N} \sum\limits_{i=1}^N \left( \nabla f_i(\mathbf{x}_i^t) - \nabla f_i(\overline{\mathbf{x}}^t)\right)\\&-\beta_n \overline{\boldsymbol{\eta}}^t.
        \end{align*}
        Therefore,
        \begin{align*}
        \overline{\mathbf{x}}^{t+1}-\mathbf{x}^\star&=\overline{\mathbf{x}}^t-\mathbf{x}^\star-\frac{\alpha_t}{N}\left(\nabla f(\overline{\mathbf{x}}^t)-f(\mathbf{x}^\star\right)
        \\&-\frac{\alpha_t}{N} \sum\limits_{i=1}^N \left( \nabla f_i(\mathbf{x}_i^t) - \nabla f_i(\overline{\mathbf{x}}^t)\right)-\beta_n \overline{\boldsymbol{\eta}}^t,    
        \end{align*}
        and by following same idea as before, and setting that $\mathbf{s}_t=\overline{\mathbf{x}}^t-\mathbf{x}^\star-\frac{\alpha_t}{N}\left(\nabla f(\overline{\mathbf{x}}^t)-f(\mathbf{x}^\star\right)
        -\frac{\alpha_t}{N} \sum\limits_{i=1}^N \left( \nabla f_i(\mathbf{x}_i^t) - \nabla f_i(\overline{\mathbf{x}}^t)\right),$ we have that 
        \begin{align}\label{boundst}
            \mathbb{E}[\|\overline{\mathbf{x}}^{t+1}-\mathbf{x}^\star\|^2|\mathcal{F}_t]\leq \mathbb{E}[\|\mathbf{s}_t\|^2|\mathcal{F}_t]+\beta_t^2c_4.
        \end{align}
        for some constant $c_4>0.$ Next, we have that
        \begin{align*}
        \|\mathbf{s}_t\|&\leq \|\overline{\mathbf{x}}^t-\mathbf{x}^\star-\frac{\alpha_t}{N}\left(\nabla f(\overline{\mathbf{x}}^t)-f(\mathbf{x}^\star\right)\|\\&+\frac{\alpha_t}{N}\| \sum\limits_{i=1}^N \left( \nabla f_i(\mathbf{x}_i^t) - \nabla f_i(\overline{\mathbf{x}}^t)\right)\|.
        \end{align*}
        By mean value theorem we have that
        \begin{align*}
            \nabla f(\mathbf{\overline{x}^t})-\nabla f(\mathbf{x}^\star)&=\int\limits_0^1 \nabla^2 f(\mathbf{x}^\star + s(\overline{\mathbf{x}}^t-\mathbf{x}^\star) ds (\overline{\mathbf{x}}^t- \mathbf{x}^\star)\\&=\overline{\mathbf{H}}^t (\mathbf{x}^t- \mathbf{x}^\star),
        \end{align*}
        where $N\mu\mathbf{I}\preceq \overline{\mathbf{H}}^t\preceq NL \mathbf{I}.$ Moreover, we have that $\| \sum\limits_{i=1}^N \left( \nabla f_i(\mathbf{x}_i^t) - \nabla f_i(\overline{\mathbf{x}}^t)\right)\|\leq L \sum\limits_{i=1}^N \|\mathbf{x}_i^t-\overline{\mathbf{x}}^t\|.$
        Thus,
        \begin{align*}
             \|\mathbf{s}_t\|\leq (1 - \mu\alpha_t) \|\overline{\mathbf{x}}^t-\mathbf{x}^\star\|+\frac{\alpha_t}{N} L \sum\limits_{i=1}^N \|\mathbf{x}_i^t-\overline{\mathbf{x}}^t\|.
        \end{align*}
    Again, by inequality $(s+q)^2\leq(1+\gamma)s^2+(1+1/\gamma)q^2,$ for $\gamma=\mu\alpha_t,$ and by $(\sum\limits_{i=1}^N r_i)^2\leq N \sum\limits_{i=1}^N r_i^2,$ we have that 
    \begin{align*}
         \|\mathbf{s}_t\|^2&\leq (1 + \mu\alpha_t)(1 - \mu\alpha_t)^2 \|\overline{\mathbf{x}}^t-\mathbf{x}^\star\|^2\\&+\left(1 + \frac{1}{\mu\alpha_t}\right)\frac{\alpha_t^2}{N} L \sum\limits_{i=1}^N \|\mathbf{x}_i^t-\overline{\mathbf{x}}^t\|^2\\
         &\leq (1 - \mu\alpha_t) \|\overline{\mathbf{x}}^t-\mathbf{x}^\star\|^2+ \frac{2\alpha_t}{N} L \sum\limits_{i=1}^N \|\mathbf{x}_i^t-\overline{\mathbf{x}}^t\|^2.
    \end{align*}
    Taking expectation in~\eqref{boundst}, we get that
    \begin{align}
        \mathbb{E}[\|\overline{\mathbf{x}}^{t+1}-\mathbf{x}^\star\|^2]&\leq  (1 - \mu\alpha_t) \mathbb{E}[ \|\overline{\mathbf{x}}^t-\mathbf{x}^\star\|^2]\nonumber \\&+ \frac{2\alpha_t}{N} L \sum\limits_{i=1}^N \mathbb{E}[\|\mathbf{x}_i^t-\overline{\mathbf{x}}^t\|^2]+ \beta_t^2 c_4\nonumber\\
        &\leq  (1 - \mu\alpha_t) \mathbb{E}[ \|\overline{\mathbf{x}}^t-\mathbf{x}^\star\|^2] + \nonumber\\&\frac{4\alpha_t}{N} L \frac{128a^2{c_1^\prime}^2(MN-1)\left( L\Tilde{c}+\|\nabla F(\boldsymbol{\theta}^\star)\|^2 \right)}{((1-
    \delta)\varphi'(0)G)^2},\label{eqn:uvrinequality3}
    \end{align}
    and hence, by Lemma~\ref{lemma:boundedsequence} we have that 
    \begin{align*}
    &\limsup\limits_{t\to\infty}\mathbb{E}[\|\overline{\mathbf{x}}^{t}-\mathbf{x}^\star\|^2]\\&\leq\frac{\frac{4\alpha_t}{N} L \frac{128a^2{c_1^\prime}^2(MN-1)\left( L\Tilde{c}+\|\nabla F(\boldsymbol{\theta}^\star)\|^2 \right)}{((1-
    \delta)\varphi'(0)G)^2}}{\mu\alpha_t}\\
    &=\frac{512a^2{c_1^\prime}^2L(MN-1)\left( L\Tilde{c}+\|\nabla F(\boldsymbol{\theta}^\star)\|^2 \right)}{N\mu((1-\delta)\varphi'(0)G)^2}.
    \end{align*}
    To prove first part of Theorem~\ref{theorem-MSE}, we use that 
    \begin{align}\label{eqn:finale}
        \mathbb{E}[\|\mathbf{x}^{t}-\boldsymbol{\theta}^\star\|^2]\leq 2 \mathbb{E}[\|\Tilde{\mathbf{x}}^{t}\|^2] + 2N\mathbb{E}[\|\overline{\mathbf{x}}^{t}-\mathbf{x}^\star\|^2],
    \end{align}
    and get that
    \begin{align*}
         &\limsup\limits_{t\to\infty} \mathbb{E}[\|\mathbf{x}^{t}-\boldsymbol{\theta}^\star\|^2]\\&
       \leq 2 \frac{128a^2{c_1^\prime}^2(MN-1)\left( L\Tilde{c}+\|\nabla F(\boldsymbol{\theta}^\star)\|^2 \right)}{((1-
    \delta)\varphi'(0)G)^2} \\&+ 2 \frac{512a^2{c_1^\prime}^2L(MN-1)\left( L\Tilde{c}+\|\nabla F(\boldsymbol{\theta}^\star)\|^2 \right)}{\mu((1-\delta)\varphi'(0)G)^2} \\
    =&\frac{256a^2{c_1^\prime}^2(MN-1)\left( L\Tilde{c}+\|\nabla F(\boldsymbol{\theta}^\star)\|^2 \right)}{((1-
    \delta)\varphi'(0)G)^2}\left(1+4\frac{L}{\mu}\right)
    \end{align*}
     To prove second part of the theorem, again we utilize Lemma~\ref{lemma:boundingsequence} and choose $t_0$ such that~\eqref{eqn:uvrinequality3} hold for all $t\geq t_0,$ and get that
    \begin{align*}
        \mathbb{E}[\|\overline{\mathbf{x}}^{t}-\mathbf{x}^\star\|^2]&\leq \frac{512a^2{c_1^\prime}^2L(MN-1)\left( L\Tilde{c}+\|\nabla F(\boldsymbol{\theta}^\star)\|^2 \right)}{N\mu((1-\delta)\varphi'(0)G)^2}\\&+ O\left(t^{-a\mu}\right).
    \end{align*}
    Finally, using inequality~\eqref{eqn:finale} we get that
    \begin{align*}
        &\mathbb{E}[\|\mathbf{x}^{t}-\boldsymbol{\theta}^\star\|^2]\\&\leq 2 \left( \frac{128a^2{c_1^\prime}^2(MN-1)\left( L\Tilde{c}+\|\nabla F(\boldsymbol{\theta}^\star)\|^2 \right)}{((1-
    \delta)\varphi'(0)G)^2}\right.\\&\left.+O\left(t^{-\frac{\varphi'(0)G(1-\delta)}{4c_1^\prime}}\right)\right)\\
    &+ 2N\left(\frac{512a^2{c_1^\prime}^2L(MN-1)\left( L\Tilde{c}+\|\nabla F(\boldsymbol{\theta}^\star)\|^2 \right)}{N\mu((1-\delta)\varphi'(0)G)^2}\right.\\&\left.+ O\left(t^{-a\mu}\right)\right)\\
    &\leq \frac{256a^2{c_1^\prime}^2(MN-1)\left( L\Tilde{c}+\|\nabla F(\boldsymbol{\theta}^\star)\|^2 \right)}{((1-
    \delta)\varphi'(0)G)^2}\left(1+4\frac{L}{\mu}\right) \\&+O\left( t^{-\gamma}\right),
    \end{align*}
    for $\gamma=\min(\frac{\varphi'(0)G(1-\delta)}{4c_1^\prime}, a\mu),$ which completes the proof.
    
    \end{proof}
	
	\section{Numerical experiments}\label{section:examples}
    In this section we provide numerical examples that illustrate our main results in Section~\ref{section-convergenceanalysis}, and we also compare the proposed algorithm~\eqref{eq:alg1} with existing methods~\cite{nediccomnoise,Ourwork,Ourwork1}.\\

    \begin{example}
     We first analize the performance of~\eqref{eq:alg1} with respect to the step size constant $a.$ We consider a network where each agent $i=0,1,...,9$ has a scalar quadratic local objective, $f_i(x)=\frac{1}{2}(x-x_i^\star)^2,$ where $x_{i_k}^\star=i_k 1000-4500,$ for some permutation $i_k, k=1,2,...,10.$  Thus, the global minimizer is $x^\star=0.$ We set that $x^0_i=x_{i}^\star+5000$ for all agents $i=0,1,...,10.$ We set the algorithm parameters to $b=1, \delta=0.51$, and $a\in\{0.01,0.05,0.1,0.5,1\}.$
    The communication noise the following probability density function 
    \begin{align}\label{eq:htdistribution}
		f(w)=\frac{\beta-1}{2\,(1+|w|)^\beta}, 
	\end{align}
	with $\beta=2.05.$
    The underlying graph for the network is a randomly generated undirected graph with $10$ nodes.
    Figures~\ref{fig:acomparison50} and~\ref{fig:acomparison300} show a Monte Carlo-estimated MSE error for the different choices of~$a.$ We can see that there is a tradeoff between the asymptotic error and the convergence rate with respect to the choice of $a$: for $a=1$, the algorithm is the fastest in achieving its limiting error, and for $a=0.01,$ the algorithm has the lowest limiting error, but it is the slowest in achieving it. 
    This is in complete accordance with Theorem~\ref{theorem-MSE}.

\begin{figure}
    \centering
    \subfigure[]
    {
        \includegraphics[height=60mm]{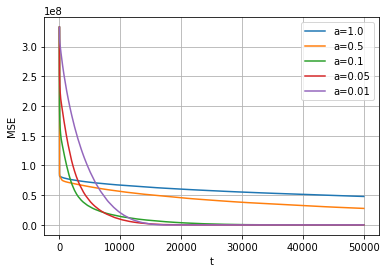}
        \label{fig:acomparison50}
    }
    \\
    \subfigure[]
    {
        \includegraphics[height=60mm]{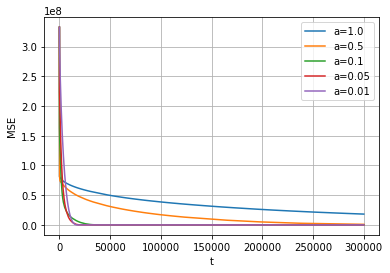}
        \label{fig:acomparison300}
    }
    \subfigure[]
    {
        \includegraphics[height=60mm]{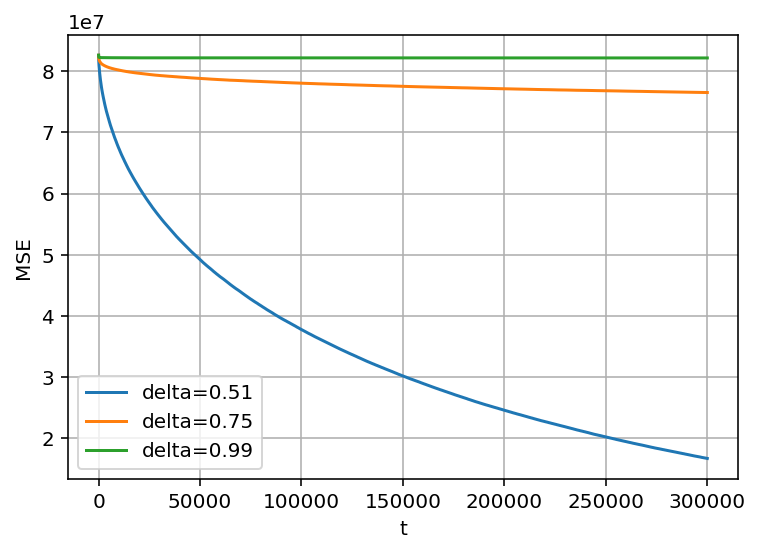}
        \label{fig:deltacomparison}
    }
    \caption{(a) Monte Carlo-estimated MSE error of proposed algorithm for the different choices of~$a$ for $50000$ iterations
        (b) Monte Carlo-estimated MSE error of proposed algorithm for the different choices of~$a$ for $300000$ iterations
        (c) Monte Carlo-estimated MSE error of proposed algorithm for the different choices of~$\delta$ }
\end{figure}
    \end{example}

    \begin{example}
     In this example, we analyze how the MSE changes by changing parameter $\delta$ in the definition of $\beta_t$.  The model setting is the same as in Example 1, except that now the algorithm parameters are $a=1,b=1$, and $\delta\in\{0.51,0.75,0.99\},$ and $x^0_i=x_{i}^\star+100$ for all agents $i=0,1,...,10.$ From  Figure~\ref{fig:deltacomparison}, it can be seen that the proposed algorithm has the best performance when $\delta=0.51.$ This is in accordance with Theorem~\ref{theorem-MSE}, where we recall that $\limsup\limits_{t\to\infty}\mathbb{E}[\|\mathbf{x}^{t}-\boldsymbol{\theta}^\star\|^2]=O(\frac{1}{1-\delta}).$
     \end{example}

     \begin{example} 
     In this example we compare the proposed algorithm~\eqref{eq:alg1} with a naive extension of the algorithm proposed in~\cite{Ourwork,Ourwork1} to the distributed optimization setting considered here. More precisely, we compare~\eqref{eq:alg1} with:
     \begin{align}\label{eqn:algext}
		\mathbf{x}_{i}^{t+1}=\mathbf{x}_{i}^{t}-\beta_{t}\sum_{j\in\Omega_{i}}
		\boldsymbol{\Psi}\left( \mathbf{x}_{i}^{t}-\mathbf{x}_{j}^{t} 
		+\boldsymbol{\xi}_{ij}^t\right)-\beta_t\nabla f_i(\mathbf{x}_i).
	\end{align}
     We consider the same model setting as in Example 2. It can be seen in Figure~\ref{fig:extensioncomparison} that the proposed algorithm outperforms algorithm~\eqref{eqn:algext}, where algorithm~\eqref{eqn:algext} exhibits a very large MSE. 
     

    \begin{figure}
    \centering
    \subfigure[]
    {
        \includegraphics[height=60mm]{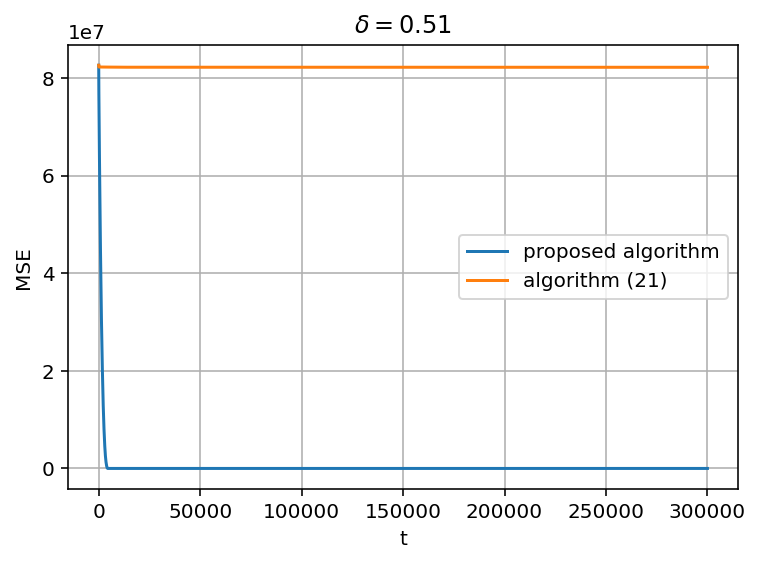}
    }
    \\
    \subfigure[]
    {
        \includegraphics[height=60mm]{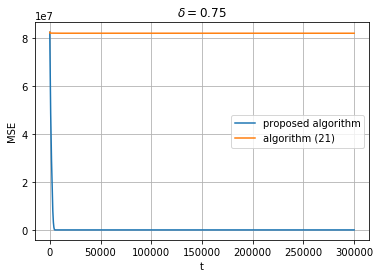}
    }
    \subfigure[]
    {
        \includegraphics[height=60mm]{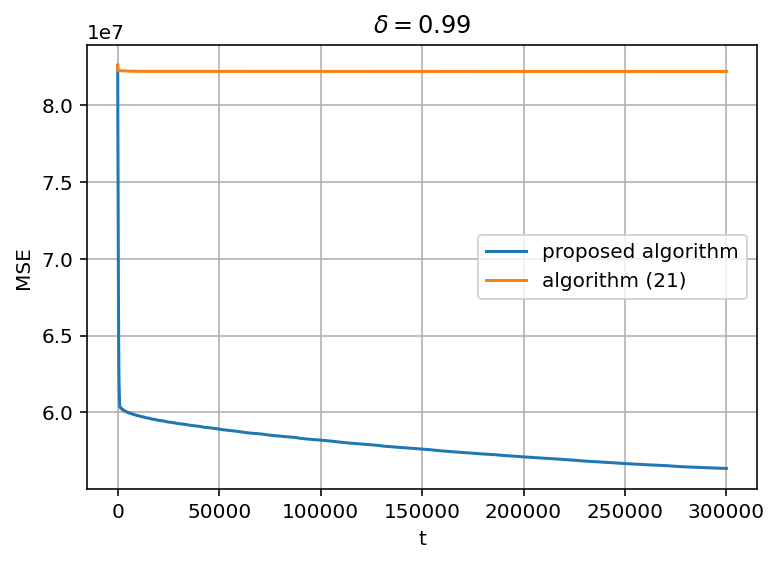}
    }
    \caption{Monte Carlo-estimated MSE error for proposed algorithm and for algorithm~\eqref{eqn:algext} for (a) $\delta=0.51,$ (b)  $\delta=0.75,$ (c) $\delta=0.99.$}
        \label{fig:extensioncomparison}
\end{figure}
\end{example}
    
    The next two examples compares the proposed algorithm with the one from~\cite{nediccomnoise} on the real data.

     \begin{example} 
     In this example we compare the proposed algorithm with the algorithm form~\cite{nediccomnoise} on a real data set and a standard supervized machine learning problem. In more detail, we  consider the Mushrooms dataset~[https://www.csie.ntu.edu.tw/~cjlin/libsvmtools/datasets/binary.html] (the original data set is slightly sub-sampled such that each agent observes the same number of data points), and we use the logistic loss function with $L2$ regulizer. Each agent has $100$ data points  and its local objective function is 
     \begin{align}\label{eqn:localobjective}
         f_i(\mathbf{x})=\sum\limits_{k=1}^{100} \ln( 1 + \exp(-b_{i_k}(a^\top_{i_k} \mathbf{x}))) +  \kappa \|\mathbf{x}\|^2,
     \end{align}
     for $\kappa=0.5.$ 
    Set is divided into a train set (6400 data points) and test set (1600 data points) and the underlying network is given with a randomly generated undirected graph with $64$ nodes.
	We compare the algorithms by using the global average value of agents' estimates - $\overline{\mathbf{x}}^t$ with respect to the CAP (Cumulative Accuracy Profile) curve and through the evaluation of test accuracy on a test set over iterations. Parameters for both methods are set to be gradient step size constant $a=1$ , consensus step size constant $b=1,$ consensus step size rate $\delta=0.51,$ and gradient step size rate in algorithm~\cite{nediccomnoise} is set to be $1$ as it is in proposed algorithm. We set that each agent starts from the origin. As it can be seen in Figures~\ref{fig:vsnedicCAP} and~\ref{fig:vsnedicacc}, the proposed algorithm outperforms the algorithm from~\cite{nediccomnoise}. 
        \begin{figure}[H]
    \centering
    \subfigure[]
    {
        \includegraphics[height=60mm]{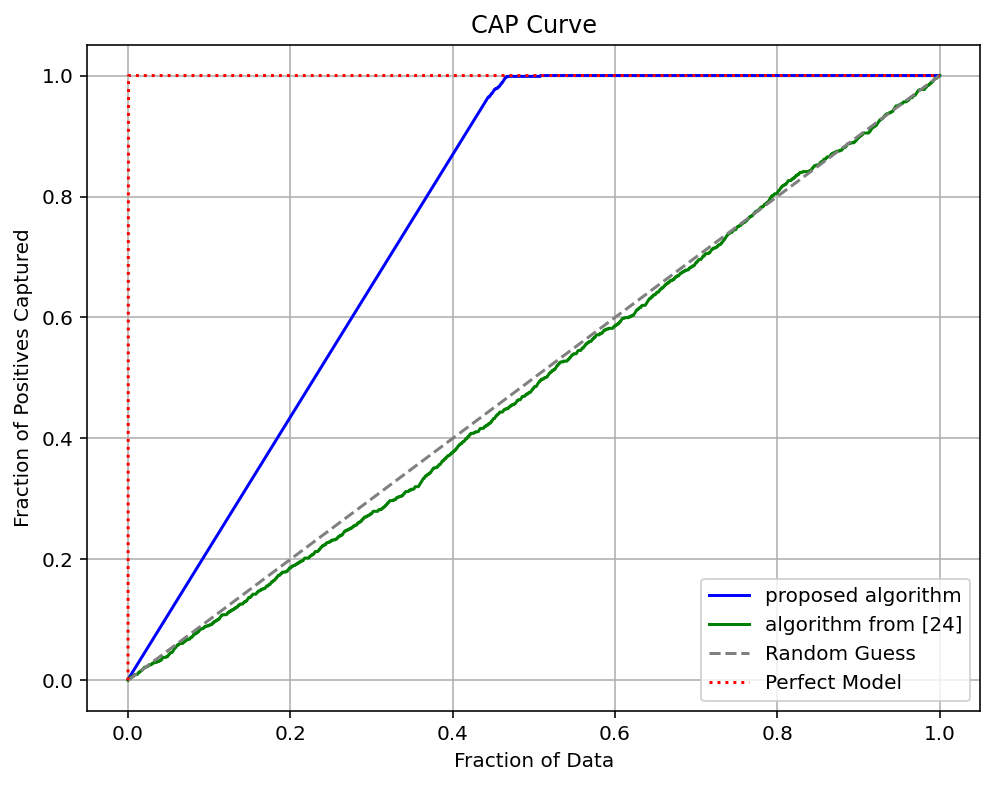}
        \label{fig:vsnedicCAP}
    }
    \\
    \subfigure[]
    {
        \includegraphics[height=60mm]{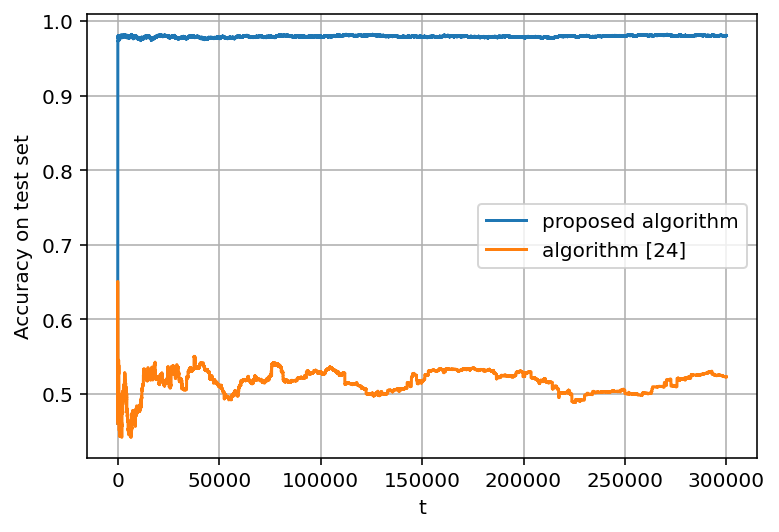}
        \label{fig:vsnedicacc}
    }
    \caption{
        (a) Monte Carlo-estimated CAP curve with proposed algorithm and algorithm form~\cite{nediccomnoise}
        (b) Monte Carlo-estimated accuracy through interation on test set for proposed algorithm and for algorithm from~\cite{nediccomnoise}}
\end{figure}
\end{example}

\begin{example} 
     In the last example we compare the proposed algorithm with the algorithm form~\cite{nediccomnoise} on a real data set and a standard supervized machine learning problem. Here, we condiser Adult data set a9a~[https://www.csie.ntu.edu.tw/~cjlin/libsvmtools/datasets/binary.html]
     (as it is in the previous Example, the original data set is slightly sub-sampled such that each agent observes the same number of data points), and we use the logistic loss function with $L2$ regulizer. Here, each agent has $500$ data points  and its local objective function is given by~\ref{eqn:localobjective}.
    Underlying network is given with a randomly generated undirected graph with $650$ nodes. Every parameter is the same as in the previous example.
    Here, we used the same methods as in previous Example to compare proposed algorithm~\eqref{eq:alg1} with the one from~\cite{nediccomnoise}. 
    Again, Figures~\ref{fig:vsnedicCAPad} and~\ref{fig:vsnedicaccad}, shows that the proposed algorithm outperforms the algorithm from~\cite{nediccomnoise}.  
        \begin{figure}[H]
    \centering
    \subfigure[]
    {
        \includegraphics[height=60mm]{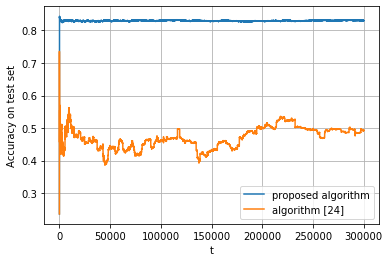}
        \label{fig:vsnedicCAPad}
    }
    \\
    \subfigure[]
    {
        \includegraphics[height=60mm]{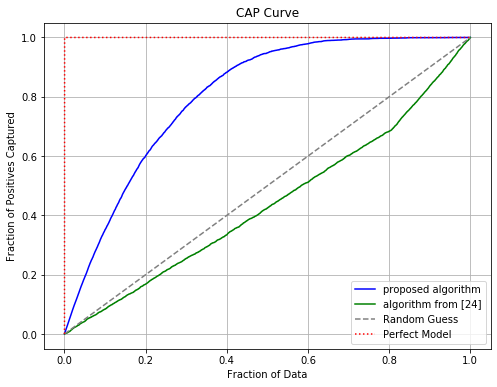}
        \label{fig:vsnedicaccad}
    }
    \caption{
        (a) Monte Carlo-estimated CAP curve with proposed algorithm and algorithm form~\cite{nediccomnoise}
        (b) Monte Carlo-estimated accuracy through interation on test set for proposed algorithm and for algorithm from~\cite{nediccomnoise}}
\end{figure}
\end{example}
    
	\section{Conclusion} \label{section-conlusion}
    We addressed the fundamental problem of designing and analyzing distributed optimization methods under heavy-tailed (infinite variance) communication noise. We developed a distributed gradient-type mixed time scale algorithm that is provably robust to heavy-tailed communication noise. 
     Specifically, the mean squared error (MSE) with the proposed method 
    and strongly convex heterogeneous local costs behaves as $O(a^2+1/t^a)$, 
    where $t$ is the iteration counter and $a$ is a constant that defines the consensus weights in the algorithm update rule. Numerical experiments corroborate the proposed method's resilience to heavy-tailed communication noise. At the same time, they show that existing distributed methods \cite{nediccomnoise} 
    designed to be robust to finite-variance communication noise 
    fail under the infinite-variance noise setting considered here.

    \bibliographystyle{siam}
	\bibliography{references.bib}
\end{document}